\newcommand{\N}{\mathbb N}
\newcommand{\Z}{\mathbb Z}
\newcommand{\abs}[1]{\left\vert#1\right\vert}
\newcommand{\set}[1]{\left\{#1\right\}}
\newcommand{\eps}{\varepsilon}
\newcommand{\h}{\mathbf{h}}
\newcommand{\CC}{\mathcal{C}}
\newcommand{\CT}{\mathcal{T}}
\newcommand{\CS}{\mathcal{S}}
\renewcommand{\emptyset}{\varnothing}
\renewcommand{\tilde}{\widetilde}
\newcommand{\sd}{\bigtriangleup}
\renewcommand{\epsilon}{\varepsilon}
\renewcommand{\geq}{\geqslant}
\newtheorem{thm}{Theorem}[section]
\newtheorem*{thm*}{Theorem}
\newtheorem{lem}[thm]{Lemma}
\newtheorem*{lem*}{Lemma}
\newtheorem*{cor*}{Corollary}
\newtheorem*{prop*}{Proposition}
\theoremstyle{definition}
\newtheorem{defn}[thm]{Definition}
\newtheorem*{defn*}{Definition}
\theoremstyle{remark}
\newtheorem{rem}[thm]{Remark}
\newtheorem*{rem*}{Remark}
\newtheorem*{example*}{Example}
\newtheorem*{que*}{Question}
\begin{document}

\title{Tilings of amenable groups
}


\author{Tomasz Downarowicz         \and
        Dawid Huczek \and
				Guohua Zhang
}


\institute{Tomasz Downarowicz \at
              Institute of Mathematics of the Polish Academy of Science\\ \'Sniadeckich 8, 00-956 Warszawa, Poland \\ and \\Institute of Mathematics and Computer Science \\ Wroclaw University of Technology \\ Wybrze\.ze Wyspia\'nskiego 27, 50-370 Wroc\l aw, Poland
              \email{downar@pwr.edu.pl}           
           \and
           Dawid Huczek \at
              Institute of Mathematics and Computer Science \\ Wroclaw University of Technology \\ Wybrze\.ze Wyspia\'nskiego 27, 50-370 Wroc\l aw, Poland
							\email{dawid.huczek@pwr.edu.pl}
						\and
					 Guohua Zhang \at
					 School of Mathematical Sciences and LMNS \\ Fudan University and Shanghai Center for Mathematical Sciences \\ Shanghai 200433, China
					\email{chiaths.zhang@gmail.com}
}

\date{Received: date / Accepted: date}

\maketitle

\begin{abstract}
We prove that for any infinite countable amenable group $G$, any $\eps>0$ and any finite subset $K\subset G$, there exists a tiling (partition of $G$ into finite ``tiles'' using only finitely many ``shapes''), where all the tiles are $(K,\eps)$-invariant. Moreover, our tiling has topological entropy zero (i.e., subexponential complexity of patterns). As an application, we construct a free action of $G$ (in the sense that the mappings, associated to different from unity elements of $G$, have no fixpoints), on a zero-dimensional space, and which has topological entropy zero.
\keywords{countable amenable group, (dynamical) tiling, free action, topological entropy}
\subclass{37B10 \and 37B40}
\end{abstract}

\section{Introduction}
In this paper we solve a question about ``tileability'' of countable amenable groups using
finitely many tiles with good invariance properties. The problem was open for a long time, but it
was shaded by another, more difficult, problem about ``tileability'' using only one tile.

In 1980, D. Ornstein and B. Weiss \cite{OW1} announced that every such group can be \emph{$\epsilon$-tiled} using finitely many ``F\o lner tiles'' (sets belonging to the F\o lner sequence), i.e., covered up to $\epsilon$ (in terms of the invariant mean) by $\epsilon$-disjoint shifted copies of these finitely many sets. The authors admit that removing the inaccuracy in covering the group remains a problem.
Nonetheless, their construction (which appears in \cite{OW2}) became fundamental in the analysis of dynamical systems (both topological and measure-theoretic) with the action of countable amenable groups. It serves as a substitute of the Rohlin lemma, allowing to generalize to this case vast majority of entropic and ergodic theorems known for the actions of $\Z$ (see e.g. \cite{OW2}, \cite{WZ}, \cite{RW}, \cite{L}, \cite{Da} and the references therein). Later, in \cite{We}, B. Weiss showed that countable amenable groups from a large class admit a precise tiling by only one shape (monotile) belonging to a selected F\o lner sequence. This class includes all countable amenable linear groups and all countable residually finite amenable groups. But, to the best of our knowledge, the $\eps$-quasitilings have never been improved in full generality to precise tilings.
\smallskip

In an attempt to carry over the \emph{theory of symbolic extensions} (see \cite{BD}) to actions of amenable groups, we have encountered serious difficulties in applying quasitilings. The imprecision  in how they cover the group, although inessential in most other cases, destroys the construction of symbolic extensions. It turns out that not only we need the tilings to be precise, but also it is desirable that they have topological entropy zero (not just arbitrarily small). This has inspired us to abandon, at least for some time, symbolic extensions and focus on improving Ornstein-Weiss' machinery in the first place.

\smallskip
Below the reader will find a complete and self-contained construction of a precise tiling (i.e., partition) of any infinite countable amenable group by shifted copies of finitely many finite sets (which we call ``shapes'' as opposed to ``tiles'', as we call the elements of the partition), where the shapes (and hence the tiles) reveal arbitrarily good invariance properties under small shifts. In fact, we produce not one but countably many such partitions, which are \emph{congruent} (each is inscribed in the following one), each is  determined by the following one, and all of them have topological entropy zero (i.e., have subexponential complexity of patterns in large regions). Such sequence of tilings can be thought of as an analog of an odometer used frequently for $\Z$-actions (more generally, for actions of residually finite groups) to introduce a system of \emph{parsings}, i.e., partitions of orbits into finite \emph{blocks}, on which one can later perform various combinatorial manipulations.

Our construction starts with building an $\epsilon$-quasitiling almost identical to  that of Ornstein and Weiss, except that our meaning of $\epsilon$-covering is in terms of lower Banach density. Regardless of the similarity, we provide all details, mainly because of somewhat complicated lower Banach density estimates. This quasitiling is then modified four times: first it is made disjoint, next precisely covering (this step is the most novel), then, given a sequence of such tilings they are made congruent, finally they are brought to a form where each is a factor of the following. In each step we control their small topological entropy, so that after the last modification the entropy is killed completely to zero.

As an application we produce a free zero-entropy action of the group on a zero-dimensional space.

\medskip
We are convinced that exact tilings might simplify many proofs in the area of ergodic theory and topological dynamics for amenable group actions and perhaps allow for new developments. For instance, in addition to symbolic extensions, the creation of Brattelli diagrams might become possible for such actions, leading to better understanding of K-theory and orbit equivalence.

\section{Preliminaries}
\subsection{Basic notions}
Throughout this paper $G$ denotes an infinite countable \emph{amenable group}, i.e., a group in which there exists a sequence of finite sets $F_n\subset G$ (called a \emph{F\o lner sequence}, or the \emph{sequence of F\o lner sets}), such that for any $g\in G$ we have
\[
\lim_{n\to\infty}\frac{\abs{gF_n\sd F_n}}{\abs{F_n}}= 0,
\]
where $gF=\set{gf:f\in F}$, $\abs{\cdot}$ denotes the cardinality of a set, and $\sd$ is the symmetric difference. Since $G$ is infinite, the sequence $|F_n|$ tends to infinity. Without loss of generality (see \cite[Corollary 5.3]{N}) we can assume that the sets in the F\o lner sequence are symmetric and contain the unity.

\begin{defn}
If $T$ and $K$ are finite subsets of $G$ and $\eps<1$, we say that $T$ is \emph{$(K,\eps)$-invariant} if
\[\frac{\abs{KT\sd T}}{\abs{T}}< \eps,\]
where $KT=\set{gh : g \in K,h\in T}$.
\end{defn}

Observe that if $K$ contains the unity of $G$, then $(K,\eps)$-invariance is equivalent to the simpler condition
\[\abs{KT}< (1+\eps)\abs{T}.\]

The following fact is very easy to see, so we skip the proof.
\begin{lem}\label{folner}
A sequence of finite sets $(F_n)$ is a F\o lner sequence if and only if for every finite set $K$ and every $\eps>0$ the sets $F_n$ are eventually $(K,\eps)$-invariant.
\end{lem}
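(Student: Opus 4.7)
The lemma is a standard equivalence between the single-shift and the finite-set formulations of the F\o lner condition, so the plan is to unpack the definitions in both directions, with essentially no technical content beyond subadditivity of symmetric differences.

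For the ``if'' direction, I will simply specialize the hypothesis to singletons. Given any $g\in G$ and $\eps>0$, applying $(K,\eps)$-invariance with $K=\set{g}$ gives $KF_n=gF_n$, so the assumed inequality $|KF_n\sd F_n|<\eps|F_n|$ becomes exactly $|gF_n\sd F_n|<\eps|F_n|$ for all sufficiently large $n$. Since $g$ and $\eps$ are arbitrary, this is the F\o lner property.

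For the ``only if'' direction, the strategy is to bound a many-element symmetric difference by a sum of one-element ones. Fix any $g_0\in K$. I will first note the set-theoretic containment
\[
KF_n\sd F_n \;\subseteq\; \bigcup_{g\in K}(gF_n\sd F_n),
\]
which follows because $KF_n\setminus F_n=\bigcup_{g\in K}(gF_n\setminus F_n)$ on one hand, and $F_n\setminus KF_n\subseteq F_n\setminus g_0F_n\subseteq g_0F_n\sd F_n$ on the other. Subadditivity of cardinality then yields
\[
\frac{|KF_n\sd F_n|}{|F_n|}\;\leq\;\sum_{g\in K}\frac{|gF_n\sd F_n|}{|F_n|},
\]
and because $K$ is finite and each summand on the right tends to $0$ by the F\o lner property, the left-hand side eventually drops below $\eps$, giving $(K,\eps)$-invariance.

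There is no real obstacle here; the only small wrinkle is that the lemma does not assume $e\in K$, so one must handle the piece $F_n\setminus KF_n$ without assuming it is empty, which is precisely what the fixed choice of $g_0\in K$ accomplishes uniformly. (The case $K=\varnothing$ is vacuous since $|\varnothing\cdot F_n\sd F_n|=|F_n|$, but the lemma is only interesting for nonempty $K$, and in practice $K$ will be taken to contain the unity, as already noted in the paper.)
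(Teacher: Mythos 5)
Your proof is correct, and the paper itself skips the proof of this lemma as ``very easy to see,'' so there is nothing to compare it against; your argument (specializing to singletons for one direction, and bounding $|KF_n\sd F_n|$ by $\sum_{g\in K}|gF_n\sd F_n|$ via the containment $KF_n\sd F_n\subseteq\bigcup_{g\in K}(gF_n\sd F_n)$ for the other) is exactly the standard one the authors had in mind. Your handling of the piece $F_n\setminus KF_n$ via a fixed $g_0\in K$, and your remark on the degenerate case $K=\emptyset$, are both appropriate.
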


\begin{lem}\label{ben}
Let $K\subset G$ be a finite set and fix some $\eps>0$. There exists $\delta>0$ such that if
$T\subset G$ is $(K,\delta)$-invariant and $T'$ satisfies $\frac{|T'\triangle T|}{|T|}\le \delta$ then
$T'$ is $(K,\eps)$-invariant.
\end{lem}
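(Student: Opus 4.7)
The plan is to bound $|KT' \triangle T'|$ using a triangle inequality for symmetric differences, passing through $KT \triangle T$ and exploiting that $K$ multiplies sizes by at most $|K|$. Concretely, I would start from the set-theoretic inclusion
\[
KT' \symdiff T' \;\subseteq\; (KT' \symdiff KT) \cup (KT \symdiff T) \cup (T \symdiff T'),
\]
which holds because $\symdiff$ satisfies the triangle inequality. The first summand is controlled by the elementary observation that $KT' \symdiff KT \subseteq K(T' \symdiff T)$, hence has cardinality at most $|K|\cdot|T'\symdiff T| \leq |K|\delta|T|$. The middle summand is at most $\delta|T|$ by $(K,\delta)$-invariance of $T$, and the last one is at most $\delta|T|$ by hypothesis.

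Next I would estimate $|T'|$ from below: since $|T'| \geq |T| - |T\symdiff T'| \geq (1-\delta)|T|$, combining these gives
\[
\frac{|KT'\symdiff T'|}{|T'|} \;\leq\; \frac{(|K|+2)\,\delta\,|T|}{(1-\delta)\,|T|} \;=\; \frac{(|K|+2)\delta}{1-\delta}.
\]
It then suffices to choose $\delta$ small enough, e.g.\ $\delta = \epsilon/(|K|+2+\epsilon)$, to make the right-hand side strictly less than $\epsilon$, which gives $(K,\epsilon)$-invariance of $T'$.

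There is no real obstacle here; the only thing to be slightly careful about is the direction of the inclusion $KT'\symdiff KT\subseteq K(T'\symdiff T)$ (which follows because an element of $KT'\setminus KT$ must be of the form $kt'$ with $t'\notin T$, and symmetrically), and the fact that one must divide by $|T'|$ rather than $|T|$, which is what forces the factor $1-\delta$ in the denominator. Both are routine. The proof does not require any of the machinery about F\o lner sequences or amenability beyond Definition~1 itself, so it can be stated as a purely combinatorial lemma about finite subsets of an arbitrary group.
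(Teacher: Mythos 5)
Your proof is correct and follows essentially the same route as the paper's: the triangle inequality for the symmetric difference passing through $KT$, the inclusion $KT'\symdiff KT\subseteq K(T'\symdiff T)$ giving the factor $|K|$, and the lower bound $|T'|\ge(1-\delta)|T|$ in the denominator. The only cosmetic point is that your explicit choice $\delta=\eps/(|K|+2+\eps)$ makes the final bound equal to exactly $\eps$ rather than strictly less; this is harmless since the middle term is in fact strictly below $\delta|T|$ by the strict inequality in the definition of $(K,\delta)$-invariance (or one can simply halve $\delta$), whereas the paper just says ``$\delta$ sufficiently small.''
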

\begin{proof}
We have $KT'\setminus KT\subset K(T'\setminus T)$ (and similarly for $T$ and $T'$ exchanged), so
$KT'\triangle KT= (KT'\setminus KT)\cup(KT\setminus KT')\subset K(T'\setminus T)\cup K(T\setminus T')=
K(T'\triangle T)$. Thus, by the triangle inequality for $|\cdot\triangle\cdot|$, we obtain
$$
\frac{|KT'\triangle T'|}{|T'|}\le
\frac{|KT'\triangle KT|+ |KT\triangle T|+|T'\triangle T|}{(1-\delta)|T|}\le  \frac{|K|\delta+\delta+\delta}{1-\delta}<\eps,
$$
if $\delta$ is sufficiently small.
\end{proof}

\begin{defn}
We say that $T'\subset T$ ($T$ finite) is a $(1-\eps)$-subset of $T$, if $\abs{T'}\geq(1-\eps)\abs{T}$.
\end{defn}

\begin{defn}
Let $K$ be a finite subset of $G$ and let $T\subset G$ be arbitrary. The \emph{$K$-core} of $T$, denoted as $T_K$, is the set $\set{g\in T: Kg\subset T}$ (this is the largest subset $T'\subset T$ satisfying $KT'\subset T$).
\end{defn}

The following fact is a fairly standard and easy exercise, nonetheless we give its proof for completeness.
\begin{lem}\label{estim}
For any $\eps>0$ and any finite $K\subset G$ there exists a $\delta$ (in fact $\delta = \frac\eps{|K|}$), such that if $T\subset G$ is finite and $(K,\delta)$-invariant then the $K$-core $T_K$ is a $(1-\eps)$-subset of $T$.
\end{lem}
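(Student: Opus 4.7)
The plan is to bound the complement $T\setminus T_K$ by relating its elements to the ``boundary'' $KT\setminus T$, and then use $(K,\delta)$-invariance to control the size of that boundary.

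First, I would unpack the definition: $g\in T\setminus T_K$ means $g\in T$ but $Kg\not\subset T$, so there exists some $k\in K$ with $kg\in KT\setminus T$. This naturally suggests defining, for each $k\in K$, the set $A_k = \{g\in T : kg\notin T\}$; then $T\setminus T_K = \bigcup_{k\in K} A_k$. Since left multiplication by $k$ is injective and sends $A_k$ into $KT\setminus T$, we get $|A_k|\le |KT\setminus T|$, hence
\[
|T\setminus T_K|\;\le\;\sum_{k\in K}|A_k|\;\le\;|K|\cdot|KT\setminus T|.
\]

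Next, I would observe that $|KT\setminus T|\le |KT\sd T|$ always (this is immediate from the definition of symmetric difference; in fact when $K$ contains the unit the two quantities are equal since then $T\subset KT$). Combined with $(K,\delta)$-invariance, this gives
\[
|T\setminus T_K|\;\le\;|K|\cdot|KT\sd T|\;<\;|K|\,\delta\,|T|.
\]
Choosing $\delta = \eps/|K|$ then yields $|T\setminus T_K|<\eps|T|$, i.e., $|T_K|>(1-\eps)|T|$, which is exactly the statement that $T_K$ is a $(1-\eps)$-subset of $T$.

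There is no genuine obstacle here; the only point requiring slight care is the injectivity step, ensuring that we do not double-count when we pass from $g$ to $kg$ (for each fixed $k$, the map $g\mapsto kg$ is a bijection $A_k\to kA_k\subset KT\setminus T$, which is why the factor $|K|$ appears and nothing worse). The entire argument is thus a one-line counting estimate after the correct identification of the ``bad'' set $T\setminus T_K$ with a union of preimages of the boundary.
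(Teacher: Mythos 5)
Your proof is correct and is essentially the same argument as the paper's: writing $T\setminus T_K=\bigcup_{k\in K}A_k$ with $A_k=T\setminus k^{-1}T$ and bounding each $|A_k|=|kT\setminus T|<\delta|T|$ before applying the union bound is just the complementary formulation of the paper's $|T_K|=\bigl|\bigcap_{g\in K}(T\cap g^{-1}T)\bigr|>(1-|K|\delta)|T|$. The injectivity point you flag is exactly the identity $|A_k|=|kA_k|$ the paper uses implicitly, so there is nothing to add.
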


\begin{proof} Note that $(K,\delta)$-invariance of $T$ implies that
$$(\forall g\in K)\ \ |gT\setminus T|<\delta|T|,$$
i.e.,
$$(\forall g\in K)\ \ |T\cap g^{-1}T| = |gT\cap T| >(1-\delta)|T|.
$$
This yields
$$|T_K|=\left|\bigcap_{g\in K}(T\cap g^{-1}T)\right|>(1-|K|\delta)|T|=(1-\eps)|T|.$$
\end{proof}

We end this subsection by recalling a standard combinatorial fact (see e.g. \cite[Lemma A.3.5]{Do}):
\begin{thm}[Marriage theorem (variant)]\label{marriage}
Let $Q$ be a countable set, let $A$ and $B$ be countable subsets of $Q$ and let $R$ be a relation between $B$ and $A$ such that for some positive integer $N$ the following two conditions hold:
\begin{itemize}
\item For any $b\in B$ the number of $a\in A$ such that $bRa$ is at least~$N$.
\item For any $a\in A$ the number of $b\in B$ such that $bRa$ is at most~$N$.
\end{itemize}
Then there exists an injective mapping $\phi$ from $B$ into $A$ such that $bR\phi(b)$, for all $b\in B$.
\end{thm}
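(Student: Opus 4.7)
The plan is to deduce this from the classical finite Hall marriage theorem combined with a diagonal (compactness) argument, since the standing assumption that each $a\in A$ is $R$-related to at most $N$ elements of $B$ will let us check Hall's condition very cheaply.

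First I would reduce to a locally finite situation: for each $b\in B$ choose, using the axiom of choice, a subset $N_0(b)\subseteq\{a\in A:bRa\}$ of cardinality exactly $N$. This replaces the relation $R$ by a relation $R_0$ in which every $b$ has exactly $N$ neighbors and every $a$ still has at most $N$ neighbors, and any injective $\phi$ chosen from $R_0$ automatically satisfies $bR\phi(b)$.

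Next I would verify Hall's condition for $R_0$: for any finite $S\subseteq B$, write $N_0(S)=\bigcup_{b\in S}N_0(b)$ and count $R_0$-edges between $S$ and $N_0(S)$ in two ways. On one side, this count is exactly $N|S|$; on the other, each $a\in N_0(S)$ is related to at most $N$ elements of $B$, hence to at most $N$ elements of $S$, so the count is at most $N|N_0(S)|$. Dividing by $N$ gives $|S|\leq|N_0(S)|$. Enumerating $B=\{b_1,b_2,\ldots\}$, the finite bipartite graph on $\{b_1,\ldots,b_n\}\cup N_0(\{b_1,\ldots,b_n\})$ therefore satisfies Hall's condition, and the classical finite marriage theorem produces an injection $\phi_n:\{b_1,\ldots,b_n\}\to A$ with $b_i R_0\phi_n(b_i)$.

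Finally I would use a diagonal argument to pass to the limit. Since $\phi_n(b_1)$ takes values in the finite set $N_0(b_1)$, some value is attained for infinitely many $n$; pass to a subsequence along which $\phi_n(b_1)$ is constant. Iterating this along $b_2, b_3, \ldots$ and taking the diagonal subsequence yields a pointwise limit $\phi:B\to A$ with $bR_0\phi(b)$ for every $b$. The map $\phi$ is injective because each approximating $\phi_n$ is injective on its (growing) domain, so any equality $\phi(b_i)=\phi(b_j)$ with $i<j$ would contradict injectivity of $\phi_n$ for all large $n$. The only mild subtlety is keeping the bookkeeping clean in the diagonal extraction, but since every $b$ has only finitely many options in $N_0(b)$, this is routine.
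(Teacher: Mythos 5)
Your proof is correct. Note that the paper does not actually prove this statement: it is quoted as a known fact with a reference to Lemma~A.3.5 of Downarowicz's \emph{Entropy in dynamical systems}, so there is no in-paper argument to compare against. Your route --- trimming each $b$ to exactly $N$ neighbours, verifying Hall's condition by the double count $N|S|\le N|N_0(S)|$, invoking finite Hall on initial segments, and extracting a pointwise limit by a diagonal argument over the finite sets $N_0(b_k)$ --- is the standard compactness proof of this $N$-to-$N$ variant, and all the steps, including the injectivity of the limit map, are justified as you state them (the case of finite $B$ being handled already by the finite Hall step).
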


\subsection{Lower and upper Banach density}

Below we present the notions of lower and upper Banach densities. In \cite{BBF} the reader will find a different, yet equivalent definition (the equivalence follows from Lemma \ref{bd} below and the fact that if $(F_n)$ is a F\o lner sequence, so is $(F_ng_n)$ for any choice of the $g_n$'s, see also \cite[formula (5)]{BBF}).

\begin{defn}
For $S\subset G$ and a finite $F\subset G$ denote
\[
\underline D_F(S)=\inf_{g\in G} \frac{|S\cap Fg|}{|F|}, \ \ \ \overline D_F(S)=\sup_{g\in G} \frac{|S\cap Fg|}{|F|}.
\]
If $(F_n)$ is a F\o lner sequence then we define two values
\[
\underline D(S)=\limsup_{n\to\infty} \underline D_{F_n}(S) \ \ \ \text{ and } \ \ \ \overline D(S)=\liminf_{n\to\infty} \overline D_{F_n}(S),
\]
which we call the \emph{lower} and \emph{upper} \emph{Banach densities} of $S$, respectively.
\end{defn}
Note that $\overline D(S) = 1- \underline D(G\setminus S)$. The following fact is fairly standard, we include its proof for completeness.

\begin{lem}\label{bd}
Regardless of the set $S$, the values of $\underline D(S)$ and $\overline D(S)$ do not depend on the
F\o lner sequence, the limits superior and inferior in the definition are in fact limits, moreover,
\begin{align*}
\underline D(S) &= \sup\{\underline D_F(S): F\subset G, F \text{ is finite}\}\ \ \ \text{ and } \\
\overline D(S) &= \,\inf\,\{\overline D_F(S): F\subset G, F \text{ is finite}\}\ge \underline D(S).
\end{align*}
\end{lem}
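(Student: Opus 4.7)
The plan is to reduce everything to the lower density case and then use the duality $\overline{D}_F(S)=1-\underline{D}_F(G\setminus S)$, which transfers every statement about $\underline{D}$ into one about $\overline{D}$. I would assume throughout (by the remarks made in the paper) that the Følner sets are symmetric and contain the unity, and similarly reduce the sup/inf over $F$ to the case of finite sets containing the unity, since enlarging $F$ to $F\cup\{e\}$ changes $|F|$ and $|S\cap Fg|$ negligibly when $F$ is large.

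The central technical step is a quasi-monotonicity lemma: for any finite $F\ni e$ and any $\delta>0$, if $F'\subset G$ is finite and $(F,\delta)$-invariant, then
\[
\underline{D}_{F'}(S)\ \geq\ \underline{D}_F(S)-\delta.
\]
I would prove this by double counting. Fix $g\in G$ and consider the sum $\sum_{h\in F'g}|S\cap Fh|$. On one hand, each summand is at least $\underline{D}_F(S)\,|F|$, so the sum is $\geq \underline{D}_F(S)\,|F|\,|F'|$. On the other hand, swapping the order of summation,
\[
\sum_{h\in F'g}|S\cap Fh|\ =\ \sum_{s\in S\cap FF'g}|F^{-1}s\cap F'g|\ \leq\ |F|\cdot|S\cap FF'g|,
\]
so $|S\cap FF'g|\geq \underline{D}_F(S)\,|F'|$. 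Finally, $(F,\delta)$-invariance gives $|FF'\setminus F'|<\delta|F'|$, hence $|S\cap F'g|\geq|S\cap FF'g|-\delta|F'|\geq(\underline{D}_F(S)-\delta)|F'|$. Taking infimum over $g$ yields the claim. This double counting is the one step I expect will require some care with the directions of inclusions, but it is standard.

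With the lemma in hand, the rest is almost automatic. Let $(F_n)$ be any Følner sequence. For any finite $F\ni e$ and any $\delta>0$, Lemma~\ref{folner} says $F_n$ is eventually $(F,\delta)$-invariant, so the lemma gives $\underline{D}_{F_n}(S)\geq\underline{D}_F(S)-\delta$ for all sufficiently large $n$. Hence $\liminf_n\underline{D}_{F_n}(S)\geq\underline{D}_F(S)$, and taking the supremum over $F$ yields $\liminf_n\underline{D}_{F_n}(S)\geq\sup_F\underline{D}_F(S)$. The reverse inequality $\limsup_n\underline{D}_{F_n}(S)\leq\sup_F\underline{D}_F(S)$ is immediate because each $\underline{D}_{F_n}(S)$ is one of the quantities in the supremum. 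Therefore the $\liminf$ and $\limsup$ agree, the limit exists, is independent of the Følner sequence, and equals $\sup_F\underline{D}_F(S)$.

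To finish, I would pass to the upper density via the identity $\overline{D}_F(S)=1-\underline{D}_F(G\setminus S)$ (which follows from $|S\cap Fg|=|Fg|-|(G\setminus S)\cap Fg|$). Applying the results already proved to $G\setminus S$ gives $\overline{D}(S)=1-\underline{D}(G\setminus S)=\inf_F\overline{D}_F(S)$, again as an honest limit independent of the Følner sequence. Finally, $\underline{D}_F(S)\leq\overline{D}_F(S)$ trivially for each $F$, so $\underline{D}(S)\leq\overline{D}(S)$ by taking limits along any Følner sequence.
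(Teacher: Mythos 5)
Your proof is correct and follows essentially the same route as the paper: both arguments double-count the pairs $(f,s)$ with $f\in F_n g$ (your $F'g$) and $s\in S\cap Ff$, conclude that $|S\cap FF_ng|\geq \underline D_F(S)|F_n|$, and then use the $(F,\delta)$-invariance of $F_n$ to pass from $FF_ng$ back to $F_ng$ at a cost of $\delta|F_n|$, finishing by the trivial reverse inequality and complementation for $\overline D$. The only cosmetic difference is that the paper extracts a single good translate $f'F_ng$ by pigeonhole where you bound the multiplicity of each $s$ directly; also note your reduction to $F\ni e$ is unnecessary (the invariance already gives $|FF'\setminus F'|\leq|FF'\symdiff F'|<\delta|F'|$), and if one did want it, the clean justification is $\underline D_{Ff^{-1}}(S)=\underline D_F(S)$ rather than a negligibility estimate.
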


\begin{proof}
We will only show that
$$
\liminf_n\underline D_{F_n}(S)\ge\sup\{\underline D_F(S): F\subset G, F \text{ is finite}\}.
$$
This, and an analogous symmetric statement, clearly imply the assertion.
Fix some $\eps>0$ and let $F$ be a finite set such that
$$
\underline D_F(S)\ge \sup\{\underline D_{F'}(S): F'\subset G, F' \text{ is finite}\}-\eps.
$$
Let $n$ be so large that $F_n$ is $(F,\eps)$-invariant. Given $g\in G$, we have
$$
|S\cap Ffg|\ge \underline D_F(S)|F|,
$$
for every $f\in F_n$. This implies that there are at least $\underline D_F(S)|F||F_n|$ pairs $(f',f)$ with $f'\in F, f\in F_n$  such that
$f'fg\in S$. This in turn implies that there exists at least one $f'\in F$ for which
$$
|S\cap f'F_ng|\ge \underline D_F(S)|F_n|.
$$
Since $f'\in F$ and $F_n$ is $(F,\eps)$-invariant (and hence so is $F_ng$), we have
$$
|S\cap f'F_ng|\le|S\cap FF_ng|\le |S\cap F_ng|+\eps|F_n|,
$$
which yields
$$
|S\cap F_ng|\ge (\underline D_F(S)-\eps)|F_n|.
$$
We have proved that $\underline D_{F_n}(S)\ge \underline D_F(S)-\eps$, which ends the proof.
\end{proof}

\subsection{Some facts from symbolic dynamics}

Let $\Lambda$ be a finite set with discrete topology. There exists a standard action of $G$ on $\Lambda^G$ (called the \emph{shift} action), defined as follows: $(gx)(h)=x(hg)$. $\Lambda^G$ with the product topology and the shift action of $G$ becomes a zero-dimensional dynamical system, called the \emph{full shift over $\Lambda$}. A \emph{symbolic dynamical system over $\Lambda$} is any closed, $G$-invariant subset $X$ of the full shift. In the following paragraph we will need the notions of a
block and an associated cylinder set. Let $F\subset G$ be a finite set. By a \emph{block with domain $F$} we will understand any element $B\in \Lambda^F$. The \emph{cylinder corresponding to $B$} is the set
$$
[B] = \{x\in\Lambda^G:x|_F = B\}.
$$
If $X$ is a symbolic system, then the set $X_F = \{B\in\Lambda^F: X\cap [B]\neq\emptyset\}$ is interpreted as the family of blocks with domain $F$ which \emph{occur} in $X$.

Given a symbolic system $X$, we can construct its \emph{topological factor} in
form of a symbolic system $Y$ over another finite alphabet, say $\Delta$, applying so-called \emph{sliding block code with finite horizon}. Such a factor is determined by a mapping $\Pi:X_F\to\Delta$ called \emph{the code}, where the set $F$ is finite and called \emph{the horizon} (of the code).
The code $\Pi$ extends to a mapping $\pi:X\to \Delta^G$ by the formula $x\mapsto y$, where $y$ is given by
$$
y(g) = \Pi(gx|_F).
$$
Then the set $Y=\pi(X)$ is a closed, shift-invariant subset of $\Delta^G$. In practice, the fact that $Y$ is a topological factor of $X$ is verified by checking whether each element $y$ of $Y$ is \emph{determined} term by term by an $x\in X$ by means of some algorithm (procedure, reasoning) \emph{with finite horizon}, i.e., whether we can decide about the symbol $y(g)$ using only the information from
\begin{enumerate}
	\item $gx|_F$ (i.e., the symbolic contents of $x$ within the copy of $F$ shifted by $g$), and
	\item some \emph{finite bank of information} (i.e., the mapping $\Pi$, which in practice is the set of instructions how to deduce $\Pi(B)$ given $B\in\Lambda^F$).
\end{enumerate}
We will refer to this intuitive understanding of factor maps between symbolic systems several times.
In case $X$ is transitive with a transitive point $x$ (i.e., $X$ equals the orbit-closure of $x$), $Y$ is a topological factor of $X$ via a mapping $\pi$ and $y=\pi(x)$ (in which case $y$ is a transitive point for $Y$), then, abusing slightly the terminology, we will say that \emph{$y$ is a topological factor of $x$} (because in such case the ``finite bank of information'' about the factor map from $X$ to $Y$ can be reconstructed from the combinatorial relation between $x$ and $y$).
It is now not hard to see, that if $(x_n, y_n)$ is a sequence of pairs in the Cartesian square of $\Lambda^G$, converging to some $(x,y)$, $y_n$ is a topological factor of $x_n$ for every $n$, with a common (independent of $n$) coding horizon, then $y$ is a topological factor of $x$ (with the same horizon).

\smallskip
We should mention here another ingredient needed later in this work, connecting upper Banach density with
invariant measures. We will use it only for symbolic systems, although its generality is much wider. We
skip the proof, which is an immediate consequence of the Ergodic Theorem for amenable groups (\cite[Theorem 1.2]{L}).

\begin{lem}\label{measure}
Let $x\in\Lambda^G$ be a symbolic element, let $F\subset G$ be a finite set, and let $B\in\Lambda^F$ be a block. Then
$$
\mu([B])\le \overline D(\{g: gx|_F = B\}),
$$
for any invariant measure $\mu$ supported by the orbit closure of $x$.
\end{lem}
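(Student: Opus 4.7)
The plan is to apply Lindenstrauss's pointwise ergodic theorem for countable amenable groups and then transfer the resulting pointwise estimate from a generic point of $\mu$ onto the orbit of $x$. First, I would rewrite the quantity defining the upper Banach density in a form that directly exposes ergodic averages. Setting $S = \{g \in G : gx|_F = B\}$, for every finite $E \subset G$ and every $h \in G$ we have
$$\frac{|S \cap Eh|}{|E|} = \frac{1}{|E|}\sum_{e \in E} \mathbf{1}_{[B]}(e\cdot hx),$$
so $\overline D_E(S)$ is nothing but the supremum, over all points $hx$ in the orbit of $x$, of the ergodic average of $\mathbf{1}_{[B]}$ along $E$.

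Next, by Lemma \ref{bd} I may replace $(F_n)$ by any F\o lner sequence, in particular by a tempered one, without affecting $\overline D(S)$. Lindenstrauss's pointwise theorem then gives, for $\mu$-almost every $y$ in the orbit closure $\overline{Gx}$,
$$\frac{1}{|F_n|}\sum_{f \in F_n} \mathbf{1}_{[B]}(fy) \longrightarrow \mathbb{E}_\mu(\mathbf{1}_{[B]} \mid \mathcal{I})(y),$$
where $\mathcal{I}$ is the $\sigma$-algebra of $G$-invariant sets. Since this conditional expectation integrates to $\mu([B])$, its essential supremum is at least $\mu([B])$; hence for every $\eps > 0$ there exists $y$ in the support of $\mu$ (and thus in $\overline{Gx}$) where the above limit exceeds $\mu([B]) - \eps$.

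Finally, I would transfer the estimate from $y$ back to $x$ using density of the orbit. For each $n$, pick $h_n \in G$ with $(h_n x)|_{FF_n} = y|_{FF_n}$, which is possible since $y \in \overline{Gx}$. A direct check shows that whenever $f \in F_n$ one has $(fh_n x)|_F = (fy)|_F$, so
$$\frac{|S \cap F_n h_n|}{|F_n|} = \frac{1}{|F_n|}\sum_{f \in F_n} \mathbf{1}_{[B]}(fy),$$
and for large $n$ this quantity exceeds $\mu([B]) - \eps$. Consequently $\overline D_{F_n}(S) \ge \mu([B]) - \eps$ eventually, whence $\overline D(S) = \liminf_n \overline D_{F_n}(S) \ge \mu([B]) - \eps$, and letting $\eps \to 0$ completes the proof. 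The only real subtlety is this last step: one must approximate $y$ by a shift of $x$ on the enlarged ``footprint'' $FF_n$ rather than just on $F_n$, so that the agreement between $h_n x$ and $y$ covers every window $Ff$ used to read $B$ as $f$ ranges over $F_n$.
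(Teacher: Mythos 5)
Your proof is correct and follows exactly the route the paper indicates: the paper omits the argument, stating that the lemma is an immediate consequence of the pointwise ergodic theorem for amenable groups (Lindenstrauss, \cite[Theorem~1.2]{L}), and you have supplied precisely that argument, with the two points that genuinely need care (passing to a tempered F\o lner sequence, which is harmless by Lemma~\ref{bd}, and matching $y$ with a shift of $x$ on the enlarged footprint $FF_n$) handled correctly. No gaps.
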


\smallskip

We will be using the notion of topological entropy for actions of amenable groups, but mainly in the case of symbolic systems. In this case it is completely analogous to that for symbolic systems with the action of $\mathbb{Z}$. In this note, by $\log$ we will mean $\log_2$.

\begin{defn}
The \emph{topological entropy} of a symbolic dynamical system $X$ is the limit
\[\h(X)=\lim_{n\to\infty}\frac{1}{\abs{F_n}}\log N(F_n),\]
where $N(F_n)$ is the \emph{$F_n$-complexity of $X$}, i.e., the cardinality $|X_{F_n}|$ of different blocks with domain $F_n$ occurring in $X$.
\end{defn}

The limit is known to exist and, by Orstein--Weiss Lemma, does not depend on the choice of the F\o lner sequence, see e.g. \cite[Theorem 6.1]{LW}. We will also need the following recent result (see \cite{DF}):

\begin{thm}\label{infimum}
The topological entropy of a symbolic dynamical system $X$ equals
\[\inf_F\frac{1}{\abs{F}}\log N(F),\]
where $F$ ranges over all finite subsets of $G$ and $N(F)$ is the $F$-complexity of $X$.
\end{thm}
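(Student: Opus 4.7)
\emph{Proof plan.} I would exhibit $\phi(F):=\log N(F)$ as a right-invariant, subadditive function on finite subsets of $G$ and then invoke a generalized Fekete-type lemma for amenable groups. Right-invariance $\phi(Fg)=\phi(F)$ holds because the shift by $g^{-1}$ induces a bijection between the blocks of $X$ with domain $Fg$ and those with domain $F$. Subadditivity $\phi(A\cup B)\leq \phi(A)+\phi(B)$ follows from $N(A\cup B)\leq N(A)N(B)$, since a block with domain $A\cup B$ is determined by its restrictions to $A$ and $B$. The theorem then reduces to the abstract claim that for every right-invariant, subadditive $\phi$ on finite subsets of $G$ and every F\o lner sequence $(F_n)$, $\lim_n \phi(F_n)/\abs{F_n}=\inf_F \phi(F)/\abs{F}$.

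One direction is free: every ratio $\phi(F_n)/\abs{F_n}$ already appears in the infimum on the right, so the limit (which exists by the standard Ornstein--Weiss lemma) is at least $\inf$. For the reverse inequality I would fix $\eps>0$, pick a finite $F^*$ with $\phi(F^*)/\abs{F^*}\leq \inf+\eps$, and try to quasi-tile a sufficiently invariant F\o lner set $F_n$ by translates of $F^*$: select a maximal family of pairwise disjoint translates $F^*c_1,\ldots,F^*c_k\subseteq F_n$ and apply subadditivity to get
\[
\phi(F_n)\leq k\,\phi(F^*)+\phi\!\left(F_n\setminus\bigcup_{i=1}^{k}F^*c_i\right),
\]
estimating the remainder trivially by its cardinality times $\log\abs{\Lambda}$.

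The real obstacle is that for a \emph{non-F\o lner} $F^*$, greedy packing only covers a fraction $\approx \abs{F^*}/\abs{F^{*-1}F^*}$ of $F_n$, which can be much smaller than $1$. The single-layer estimate above then yields only the weak bound $\phi(F_n)/\abs{F_n}\leq \phi(F^*)/\abs{F^*}+c\log\abs{\Lambda}$ with a constant $c$ that is \emph{not} small. To push the remainder down to $o(\abs{F_n})$, I would iterate the quasi-tiling in the spirit of the Ornstein--Weiss construction: after placing disjoint $F^*$-translates, quasi-tile the leftover by translates of an auxiliary finite shape chosen to fit the gaps, and continue for finitely many layers until the uncovered portion has density below a preassigned $\eta$. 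Sending $n\to\infty$ and then $\eps,\eta\to 0$ would give $\limsup_n \phi(F_n)/\abs{F_n}\leq \inf_F \phi(F)/\abs{F}$. Carrying out this iteration cleanly for arbitrary (not F\o lner) finite $F^*$ is the technical heart of the argument, and is the content of~\cite{DF}.
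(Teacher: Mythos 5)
First, a point of reference: the paper does not actually prove Theorem~\ref{infimum} --- it quotes it from \cite{DF} --- so your sketch can only be measured against that reference and on its own terms. On its own terms it has a genuine gap, which you have located but not repaired. Your reduction to the abstract claim ``for every right-invariant subadditive $\phi$ the Ornstein--Weiss limit equals $\inf_F\phi(F)/|F|$'' is a reduction to a statement that is not a consequence of the Ornstein--Weiss lemma (which only gives existence of the limit along F\o lner sequences) and is not known to hold at that level of generality; the infimum rule genuinely uses more than subadditivity plus invariance. Your proposed repair of the hard inequality --- iterate the quasi-tiling on the leftover with auxiliary shapes --- cannot work as described: after greedily packing $F_n$ by disjoint translates of a non-F\o lner $F^*$, the uncovered set has density bounded below by a constant depending only on $F^*$ (as you note, of order $1-|F^*|/|(F^*)^{-1}F^*|$), and the only bound available for $\phi$ on \emph{any} shape you subsequently place there is the trivial one, $|{\cdot}|\log|\Lambda|$, unless that shape itself has small ratio $\phi/|\cdot|$ --- which is exactly what is to be proved. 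Ornstein--Weiss quasi-tilings produce F\o lner shapes, for which no such bound is given, so the error term in your iteration does not tend to zero; it stabilizes at a positive multiple of $\log|\Lambda|$.

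The missing idea is to abandon disjoint packings in favour of a covering with \emph{constant multiplicity}, combined with Shearer's inequality --- a strengthening of $N(A\cup B)\le N(A)N(B)$ that is special to block counting (and to Shannon entropy) and is precisely what \cite{DF} supplies. Concretely, as $g$ ranges over $(F^*)^{-1}F_n$ the translates $F^*g$ cover every point of $F_n$ exactly $|F^*|$ times, and Shearer's inequality applied to the family $\{F^*g\cap F_n\}$ gives
\[
N(F_n)^{|F^*|}\le\prod_{g\in (F^*)^{-1}F_n}N(F^*g\cap F_n)\le N(F^*)^{\left|(F^*)^{-1}F_n\right|},
\]
whence $\frac{1}{|F_n|}\log N(F_n)\le\frac{|(F^*)^{-1}F_n|}{|F^*|\,|F_n|}\log N(F^*)$, and the F\o lner property makes the prefactor at most $(1+\eps)/|F^*|$ for large $n$. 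No iteration and no handling of a leftover set is needed. Without Shearer's inequality (or an equivalent multiplicity-weighted subadditivity), plain subadditivity and right-invariance do not close the argument, so your plan as written does not yield the theorem.
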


It is well known, for actions of amenable groups, that if $Y$ is a topological factor of $X$ then
$\h(Y)\le\h(X)$.

\smallskip
If $X$ is transitive with a transitive point $x$ then $X_{F_n}$ coincides with the family of blocks $\{gx|_{F_n}:g\in G\}$, so that $\h(X)$ can be evaluated by examining $x$ only. In such case we will alternatively denote $\h(X)$ by $\h(x)$ and call it \emph{the entropy of $x$}. This convention will be used later to define \emph{entropy of a quasitiling}.

At some point we will also refer to measure-theoretic entropy and the variational principle. We choose to skip discussing these ingredients here; the necessary information is provided near where they are applied.

\smallskip
We need to say a few words about topological joinings of symbolic systems. We will restrict to transitive systems. Let $x$ and $y$ be two symbolic elements with possibly different alphabets, say $\Lambda$ and $\Delta$. The pair $(x,y)$ can be viewed as a symbolic element with the alphabet $\Lambda\times\Delta$. The shift-orbit closure of so understood $(x,y)$ is an example of a \emph{topological joining} $X\vee Y$ between the systems $X$ and $Y$ generated (as shift-orbit closures) by $x$ and $y$, respectively. There 
are many other joinings of $X$ and $Y$, (for instance, the direct product), but we will mostly use joinings of the above form. Both $X$ and $Y$ are topological factors of $X\vee Y$ (by projections), and we have the standard inequality
$$
\h(X\vee Y)\le \h(X)+\h(Y).
$$

At one occasion we will also use a countable product of subshifts. What we need to know about such products is that they are zero-dimensional and that the topological entropy equals the sum of the series of entropies of the component subshifts. These are standard facts and we skip their proofs.
\smallskip

\section{Quasitilings and tilings}

Our definitions given below are slightly different from the original ones in \cite{OW2}, however most of the
differences are inessential. Sometimes we will refer to quasitilings and tilings defined below as \emph{static} as opposed to \emph{dynamical}, which will be introduced later.

\begin{defn} A \emph{quasitiling} is determined by two objects:
\begin{enumerate}
	\item a finite collection $\CS(\CT)$ of finite subsets of $G$ containing the unity $e$,
	called \emph{the shapes}.
	\item a finite collection $\CC(\CT) = \{C(S):S\in\CS(\CT)\}$ of disjoint subsets of $G$, called \emph{center sets} (for the shapes).
\end{enumerate}
The quasitiling is then the family $\CT=\{(S,c):S\in\CS(\CT),c\in C(S)\}$. We require that the map $(S,c)\mapsto Sc$ is injective.\footnote{This requirement is stronger than asking that different tiles have different centers. Two tiles $Sc$ and $S'c'$ may be equal even though $c\neq c'$
(this is even possible when $S=S'$). However, when the tiles are disjoint, then the (stronger) requirement
follows automatically from the fact that the centers belong to the tiles.} Hence, by the \emph{tiles} of $\CT$ (denoted by the letter $T$) we will mean either the sets $Sc$ or the pairs $(S,c)$ (i.e., the tiles with defined centers), depending on the context.
\end{defn}

\begin{defn}\label{qt} Let $\eps\in[0,1)$ and $\alpha\in(0,1]$. A quasitiling $\CT$ is called
\begin{enumerate}
	\item \emph{$\eps$-disjoint} if there exists a mapping $T\mapsto T^\circ$ ($T\in\CT$) such that
	\begin{itemize}
	\item $T^\circ$ is a $(1-\eps)$-subset of $T$, and
	\item $T\neq T'\implies T^\circ\cap {T'}^\circ=\emptyset$;
	\end{itemize}
	\item \emph{$\alpha$-covering} if $\underline D(\bigcup\CT)\ge\alpha$;
	\item an (exact) \emph{tiling} if it is a partition of $G$.
\end{enumerate}
\end{defn}

\begin{rem}\label{centers}
Suppose $\CT$ is a quasitiling and $T\mapsto T^\heartsuit$ associates to the tiles of $\CT$ their
``modifications'', disjoint for different tiles. The ``modifications'' are assumed bounded in the
following sense: if $T=Sc$ is a tile (with shape $S$ and center $c$) then $T^\heartsuit c^{-1}$ is contained in some a priori given finite set $F$. Then the collection $\CT^\heartsuit = \{T^\heartsuit:T\in\CT\}$ gives rise to a new, disjoint quasitiling. We need, however, to redefine the centers so that they fall inside the new tiles (there are a priori no other restrictions). Once this is done, the collection of the new shapes is determined (and it is finite) and so are the corresponding center sets for each shape. By disjointness of the new tiles, all other requirements in the definition of quasitilings are fulfilled. However, a careless assignment of the centers may drastically enlarge the collection of shapes, and thus excessively increase the entropy of the quasitiling (see Definition \ref{dynam}). To avoid that, on every such occasion we will, by default, apply one deterministic procedure, as follows. Let us enumerate the set $F=\{g_1,g_2,\dots,g_l\}$. Now, if $T^\heartsuit$ is a tile obtained from $T=Sc$ then we set its center at the point $g\in T^\heartsuit$ such that $gc^{-1}$ has the smallest number among $T^\heartsuit c^{-1}$. This arrangement uses only the information from the quasitiling $\CT$, the mappings $T\mapsto T^\heartsuit$, and the ``finite bank'' of information about the ordering of the finite set $F$.
\end{rem}

\begin{lem}\label{cov1} Let $\CT$ be a $(1-\eps)$-covering quasitiling. Suppose $T\mapsto T^\heartsuit$
associates to the tiles of $\CT$ their $\alpha$-subsets, disjoint for different tiles.
Then the quasitiling $\CT^\heartsuit=\{T^\heartsuit:T\in\CT\}$ is $\alpha(1-\eps)$-covering.
In particular, for an $\eps$-disjoint, $(1-\eps)$-covering quasitiling $\CT$, the disjoint quasitiling $\CT^\circ$ (as in Definition \ref{qt}) is $(1-\eps)^2$-covering.
\end{lem}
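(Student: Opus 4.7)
The plan is to estimate $\underline{D}(\bigcup\mathcal{T}^\heartsuit)$ by a window argument along a F\o{}lner sequence, exploiting the disjointness of the modified tiles. Let $K\subset G$ be a finite symmetric set that contains the (finitely many) shapes of $\mathcal{T}$ together with the unity, and fix a F\o{}lner sequence $(F_n)$ that is $(K,\delta_n)$-invariant with $\delta_n\to 0$ (such exists by Lemma \ref{folner}). Since $\underline{D}$ is translation invariant, all translates $A=F_ng$ inherit the same $(K,\delta_n)$-invariance, so the estimates below will be uniform in $g$.

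For each $n$ and each $g\in G$ set $A=F_ng$. The key step uses pairwise disjointness of the $T^\heartsuit$:
$$\bigl|A\cap\textstyle\bigcup\mathcal{T}^\heartsuit\bigr|=\sum_{T\in\mathcal{T}}|A\cap T^\heartsuit|\ \geq\ \sum_{T\subseteq A}|T^\heartsuit|\ \geq\ \alpha\sum_{T\subseteq A}|T|\ \geq\ \alpha\Bigl|\bigcup_{T\subseteq A}T\Bigr|,$$
where the first step drops tiles straddling the boundary of $A$, the second uses that $T^\heartsuit$ is an $\alpha$-subset of $T$, and the last bounds a union by a sum of cardinalities.

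It remains to compare $\bigcup_{T\subseteq A}T$ with $A\cap\bigcup\mathcal{T}$. A tile $T=Sc$ satisfies $T\subseteq A$ whenever $c$ lies in the $K$-core $A_K$ (since $S\subseteq K$ implies $Sc\subseteq Kc\subseteq A$), and any tile $T$ with $T\cap A\neq\emptyset$ must have center $c\in KA$ (since $K$ is symmetric and $S\subseteq K$). Hence the tiles that contribute to $A\cap\bigcup\mathcal{T}$ but not to $\bigcup_{T\subseteq A}T$ have centers in $KA\setminus A_K$. By the $(K,\delta_n)$-invariance of $A$ and Lemma \ref{estim}, this set has cardinality at most $(1+|K|)\delta_n|A|$, and since each tile has at most $|K|$ elements, the missed mass is at most $|K|(1+|K|)\delta_n|A|$. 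Thus
$$\frac{|A\cap\bigcup\mathcal{T}^\heartsuit|}{|F_n|}\ \geq\ \alpha\,\frac{|A\cap\bigcup\mathcal{T}|}{|F_n|}\ -\ \alpha|K|(1+|K|)\delta_n.$$

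Taking the infimum over $g\in G$ gives $\underline{D}_{F_n}(\bigcup\mathcal{T}^\heartsuit)\geq\alpha\,\underline{D}_{F_n}(\bigcup\mathcal{T})-\alpha|K|(1+|K|)\delta_n$. Letting $n\to\infty$, Lemma \ref{bd} identifies both one-sided limits along a F\o{}lner sequence with $\underline{D}$, yielding $\underline{D}(\bigcup\mathcal{T}^\heartsuit)\geq\alpha\,\underline{D}(\bigcup\mathcal{T})\geq\alpha(1-\varepsilon)$. The ``in particular'' clause follows by specializing $T^\heartsuit=T^\circ$ and $\alpha=1-\varepsilon$. The only nontrivial ingredient is the uniform boundary estimate, which is routine given Lemma \ref{estim} and the translation invariance of the F\o{}lner property, so no serious obstacle is expected.
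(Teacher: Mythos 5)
Your proof is correct and follows essentially the same route as the paper's: lower-bound the mass of $\bigcup\CT^\heartsuit$ in a F\o lner window by $\alpha$ times the mass of the tiles fully contained in that window, then show the straddling tiles contribute only $o(|F_n|)$ via an invariance estimate, and pass to the limit using Lemma \ref{bd}. The only cosmetic difference is that you bound the straddling tiles by counting their centers in $KA\setminus A_K$ and multiplying by the maximal tile size, whereas the paper confines their intersection with the window to $F\setminus F_{EE^{-1}}$; both give the same negligible error term.
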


\begin{proof} Denote $E=\bigcup\CS(\CT)$.
Let $\CT_F$ be the collection of tiles $T\in\CT$ entirely contained in some finite set $F$.
As easily verified, all other tiles $T\in\CT$ are disjoint with the $EE^{-1}$-core of $F$. Denote also $\CT_F^\heartsuit = \{T^\heartsuit:T\in\CT_F\}$ and
$$
\theta(F) = \left|F\cap\bigcup\CT_F^\heartsuit\right|.
$$
Clearly, $|F\cap\bigcup\CT_F|\le \theta(F)\frac1\alpha$. Thus and by the $(1-\eps)$-covering assumption, for any $\xi>0$ the following holds
$$
(1-\eps-\xi)|F|\le |F\cap\bigcup\CT|\le \theta(F)\tfrac1{\alpha}+|F\setminus F_{EE^{-1}}|\le \theta(F)\tfrac1\alpha+\xi|F|,
$$
if $F$ is ``sufficiently invariant" (see Lemma \ref{estim}), and then the same holds for any shifted set $Fg$. Thus
$$
\theta(Fg)\ge \alpha(1-\eps-2\xi)|F|.
$$
Since, by Lemma \ref{bd}, $\underline D(\bigcup\CT^\heartsuit)\ge \inf_g\frac{\theta(Fg)}{|F|}$, and $\xi$ is arbitrary, we obtain the assertion.
\end{proof}

\begin{defn}\label{dynam}
Every quasitiling $\CT$ can be represented in a symbolic form, as a point $x_\CT\in\Lambda^G$, with the alphabet $\Lambda = \CS(\CT)\cup\{0\}$, as follows: $x_\CT(g)=S \iff g\in C(S)$, $0$ otherwise. Let
$X_\CT$ be the orbit closure of $x_\CT$ under the shift action. This system is called the \emph{dynamical quasitiling} (generated by $\CT$). If $\CT$ is a tiling, we obtain a \emph{dynamical tiling}. 
According to our earlier convention, by the \emph{entropy} of $\CT$ (denoted as $\h(\CT)$) we will understand the topological entropy of $X_\CT$.
\end{defn}

\begin{rem}\label{dyntil}
Note that every element of $X_\CT$ represents a quasitiling with the same set of shapes as $\CT$. 
Moreover, if $\CT$ has any of the following properties: $\epsilon$-disjointness, disjointness, $(1-\epsilon)$-covering, being a tiling, then every quasitiling in $X_\CT$ has the same property.
\end{rem}

The following lemmas will be used in the entropy estimates of some quasitilings and tilings.

\begin{lem}\label{combinat}
There exists a function $\Theta:(0,1)\to(0,1)$, with $\lim_{\eps\to 0}\Theta(\eps) = 0$,
such that for any finite set $F\subset G$ and $\eps\in(0,1)$ the number of all subsets of $F$ with cardinality not
exceeding $\eps|F|$ is smaller than $2^{|F|\Theta(\eps)}$.
\end{lem}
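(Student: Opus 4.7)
The total number of subsets of $F$ of cardinality at most $\eps|F|$ equals $\sum_{k=0}^{\lfloor \eps|F|\rfloor}\binom{|F|}{k}$, so the lemma reduces to bounding a truncated binomial sum. My plan is to invoke the classical binary-entropy estimate: writing $n=|F|$ and $H(\eps) = -\eps\log_2\eps - (1-\eps)\log_2(1-\eps)$, I will show that for $\eps \leq 1/2$,
\[
\sum_{k=0}^{\lfloor \eps n\rfloor}\binom{n}{k} \leq 2^{n H(\eps)}.
\]
Setting $\Theta(\eps) := H(\eps)$ for $\eps$ in this range (and extending by any continuous function taking values in $(0,1)$ for $\eps$ near $1$) then gives the desired $\Theta$, with $\Theta(\eps)\to 0$ as $\eps\to 0$ by continuity of $H$ at $0$.

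The entropy bound itself follows from the standard probabilistic argument. Starting from the identity $1=\sum_{k=0}^{n}\binom{n}{k}\eps^k(1-\eps)^{n-k}$ and discarding the nonnegative summands with $k > \eps n$, one obtains $\sum_{k\leq \eps n}\binom{n}{k}\eps^k(1-\eps)^{n-k}\leq 1$. For $\eps\leq 1/2$ the ratio $\eps/(1-\eps)$ is at most $1$, so the weight $\eps^k(1-\eps)^{n-k}$ is non-increasing in $k$ throughout the range $k\in[0,\lfloor\eps n\rfloor]$; its minimum, attained at $k=\lfloor\eps n\rfloor$, is at least $\eps^{\eps n}(1-\eps)^{(1-\eps)n} = 2^{-nH(\eps)}$. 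Factoring this lower bound out of the sum gives the displayed inequality.

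There is no real obstacle here; the argument is a routine piece of combinatorics, with the probabilistic derivation of the entropy bound being the only marginally non-trivial step. The one mild subtlety is that for $\eps$ close to $1$ the truncated binomial sum is already itself close to $2^n$, so no $\Theta(\eps)<1$ can bound it uniformly in $|F|$ in that regime; however, subsequent applications only use the lemma for small $\eps$, so an arbitrary extension of $\Theta$ on $(1/2,1)$ causes no harm.
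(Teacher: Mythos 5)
Your proof is correct. The paper gives no written argument for this lemma beyond the remark that it is ``based on Stirling's formula'', i.e., the intended route is presumably to bound each $\binom{n}{k}$ with $k\le\eps n$ by $2^{nH(\eps)}$ times a polynomial factor via Stirling and then absorb that factor and the $\lfloor\eps n\rfloor+1$ summands into a slightly enlarged exponent. Your probabilistic derivation of $\sum_{k\le\eps n}\binom{n}{k}\le 2^{nH(\eps)}$ arrives at the same entropy function but is cleaner: the inequality is exact for every $n$ (no polynomial corrections or asymptotics to absorb), it is genuinely elementary, and it immediately yields a valid $\Theta=H$ on $(0,\tfrac12)$; the strict inequality demanded by the statement also holds, since the discarded terms with $k>\eps n$ in your identity are strictly positive. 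Your closing observation is accurate and worth recording: for $\eps\ge\tfrac12$ the truncated sum is at least $2^{n-1}$, and $2^{n-1}<2^{n\Theta}$ for all $n$ forces $\Theta\ge 1$, so the lemma as literally phrased (with $\Theta(\eps)\in(0,1)$ for all $\eps\in(0,1)$) is false on $[\tfrac12,1)$. Since the lemma is only ever invoked in the paper for small $\eps$ (in Lemmas \ref{entest} and \ref{disjoint_tilings}), this is a harmless overstatement, but strictly speaking the hypothesis should be restricted to $\eps\in(0,\tfrac12)$ or the conclusion weakened to ``for all sufficiently small $\eps$''.
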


The elementary proof is based on Stirling's formula.

\begin{lem}\label{entest}
For each positive integer $r$ and $\eps>0$ there exists a $\delta=\delta(r,\eps)>0$ for which the following statement holds:
Let $\CT$ be a $\frac14$-disjoint quasitiling of $G$. Suppose $\CS(\CT)$ can be divided into $r$ disjoint \emph{classes} $\CS_1,\CS_2,\dots,\CS_r$ such that, for each $i=1,2,\dots,r$, we have
\begin{align}
&s_i=\min\{|S|:S\in\CS_i\}\ge\frac1\delta, \\
&|\CS_i|\le 2^{\eps s_i}.
\end{align}
Then $\h(\CT)<3\eps$.
\end{lem}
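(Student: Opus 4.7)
The plan is to invoke Theorem \ref{infimum}, which reduces the problem to exhibiting a single finite $F\subset G$ for which $\tfrac{1}{|F|}\log N(F)<3\eps$. Let $E=\bigcup\CS(\CT)$, fix $\eta\leq \tfrac12$, and (using Lemma \ref{folner}) take $F$ that is $(E,\eta)$-invariant and large enough that certain $(|F|+1)^{r+1}$-type error terms become negligible upon dividing by $|F|$.

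The first step is to bound the density of centers in $F$. Let $n_i$ denote the number of centers $c\in F$ whose tile has shape in the class $\CS_i$. Since every shape contains $e$, each such tile $T$ lies in $EF$; the $\tfrac14$-disjoint modifications $T^\circ$ satisfy $|T^\circ|\geq\tfrac34|T|\geq\tfrac34 s_i$ and are pairwise disjoint. Consequently
\[
\tfrac34\sum_{i=1}^r s_i n_i \;\leq\; \Bigl|\bigcup T^\circ \cap EF\Bigr| \;\leq\; |EF|\;\leq\;(1+\eta)|F|,
\]
hence $\sum_i s_i n_i\leq 2|F|$. Since $s_i\geq 1/\delta$, the total number $n:=\sum_i n_i$ of centers in $F$ is at most $2\delta|F|$.

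Next I count blocks $x_\CT|_F$. Such a block is specified by (i) the set $C\subset F$ of centers, together with (ii) a specific shape in $\CS(\CT)$ assigned to each $c\in C$. For (i), Lemma \ref{combinat} gives at most $2^{|F|\Theta(2\delta)}$ subsets. For (ii), I partition $C=\bigsqcup_{i=1}^r C_i$ by class, so the number of assignments (for given $(n_i)$) is $\binom{n}{n_1,\dots,n_r}\prod_i|\CS_i|^{n_i}\leq r^n\cdot 2^{\eps\sum_i s_i n_i}$. Summing over the polynomially-many tuples $(n_i)$ and using the two density bounds yields
\[
N(F)\;\leq\; \mathrm{poly}(|F|)\cdot 2^{|F|\Theta(2\delta)}\cdot r^{2\delta|F|}\cdot 2^{2\eps|F|},
\]
and therefore
\[
\tfrac{1}{|F|}\log N(F)\;\leq\;\Theta(2\delta)+2\delta\log r+2\eps+o(1).
\]
Choosing $\delta=\delta(r,\eps)$ small enough that $\Theta(2\delta)+2\delta\log r<\eps$, and $|F|$ large enough to absorb the $o(1)$, gives $\h(\CT)<3\eps$.

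The only delicate point is step (ii): the naive bound $|\CS(\CT)|^n\leq (r\cdot 2^{\eps\max_i s_i})^n$ is useless, since $\max_i s_i$ is not controlled by the hypotheses. The key trick is to refine the count by class membership, so that each $|\CS_i|\leq 2^{\eps s_i}$ is paired with its own $n_i$ and then collapsed using the uniform density inequality $\sum_i s_i n_i\leq 2|F|$ coming from $\tfrac14$-disjointness. Everything else (the $\Theta$ estimate from Stirling and the Følner choice of $F$) is routine.
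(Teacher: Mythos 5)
Your proof is correct and follows essentially the same route as the paper's: both rest on the key observation that $\tfrac14$-disjointness forces $\sum_i s_i n_i\le 2|F|$ for the centers in any translate of an $(E,\tfrac12)$-invariant set $F$, and both then count blocks by first choosing the (sparse) center set via Lemma \ref{combinat} and the class labels, and only then paying $\prod_i|\CS_i|^{n_i}\le 2^{\eps\sum_i s_i n_i}\le 2^{2\eps|F|}$ for the shapes within each class. The paper phrases the class-label step as passing to a factor symbol sequence $\hat x$ over $\{0,1,\dots,r\}$ rather than via a multinomial coefficient, but the resulting bounds are the same up to negligible factors.
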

\begin{proof}
Let $E=\bigcup\CS(\CT)$. Let $F=F_n$ (the element of the F\o lner sequence) for some large index $n$ (recall that then $|F|$ is also large).
We can assume that $F$ is $(E,\frac12)$-invariant. We will count the family $\{gx|_F:g\in G\}$, where $x$ is the symbolic representation of $\CT$.

Let $\CT_{Fg}$ denote the collection of tiles with centers in $Fg$. Notice that these tiles are contained in $EFg$. Further, let $\CT^\circ_{Fg}=\{T^\circ:T\in\CT_{Fg}\}$ (a disjoint selection of $\frac34$-subsets from each member of $\CT_{Fg}$). Denote by $\CT_{Fg,i}$ the subfamily of $\CT_{Fg}$ consisting of the tiles with shapes in $\CS_i$, and finally denote $\CT^\circ_{Fg,i}=\{T^\circ:T\in\CT_{Fg,i}\}$. The cardinalities of $\CT^\circ_{Fg,i}$ and $\CT_{Fg,i}$ are obviously the same, and the size of each $T^\circ\in\CT_{Fg,i}$
is at least $\frac34|T|$, hence at least $\frac34s_i$. By disjointness of the tiles $T^\circ$,
we have
$$
\sum_{i=1}^r s_i|\CT_{Fg,i}|\le \frac{4|EF|}{3}\le 2|F|.
$$
In particular, since each $s_i$ is not less than $\frac1\delta$, we get
$|\CT_{Fg}|\le 2\delta|F|$. Now replace each symbol $S$ in the symbolic representation of $x$ by the symbol $i$ such that $S\in\CS_i$.  The above procedure replaces $x$ by a new symbolic element $\hat x$, over the alphabet $\{0,1,2,\dots,r\}$. Since there are at most $2\delta|F|$ nonempty terms in the symbolic representation of $gx|_F$ (hence in $g\hat x|_F$), the number of possible blocks $g\hat x|_F$ does not exceed $2^{\Theta(2\delta)|F|}\cdot(r+1)^{2\delta|F|}$.

Next, we will bound the number of different blocks of the form $gx|_F$ which produce the same block of the form $g\hat x|_F$. So, fix some $g\in G$ and observe the block $g\hat x|_F$. Each symbol $i$ appears in $g\hat x|_F$ exactly $|\CT_{Fg,i}|$ times. This creates at most $|\CS_i|^{|\CT_{Fg,i}|}\le 2^{\eps s_i|\CT_{Fg,i}|}$ possibilities for the configurations of the symbols of $x$ replacing the symbols $i$. Jointly, there are at most
$$
\prod_{i=1}^r 2^{\eps s_i|\CT_{Fg,i}|} = 2^{\eps\sum_{i=1}^rs_i|\CT_{Fg,i}|}\le 2^{2\eps|F|}
$$
blocks of the form $gx|_F$ for each block of the form $g\hat x|_F$. Altogether, there are at most
$2^{\Theta(2\delta)|F|}\cdot(r+1)^{2\delta|F|}\cdot 2^{2\eps|F|}$ blocks of the form $gx|_F$, which, after taking logarithm, dividing by $|F|$ and letting $|F|\to\infty$, yields the entropy estimate
$$
\h(\CT)\le \Theta(2\delta)+ 2\delta\log(r+1)+2\eps.
$$
Since, by Lemma \ref{combinat}, $\Theta(2\delta)\to 0$ as $\delta\to 0$,
the assertion follows.
\end{proof}

We will be also using the following lemma. The easy proof resembles part of the proof of the preceding lemma (small entropy of $\hat x$).

\begin{lem}\label{symbolic_small_entropy}
For any finite set $\Lambda$ in which we select one element (and call it ``zero''), and $\eps>0$ there
exists a $\delta>0$ such that for any symbolic element $x$ with the alphabet $\Lambda$ and upper Banach density of non-zero symbols smaller than $\delta$, has entropy less than $\eps$.
\end{lem}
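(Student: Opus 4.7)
The plan is to mimic the bounding of $\h(\hat x)$ from the proof of Lemma~\ref{entest}: count blocks of $x$ by first choosing a ``support'' and then a labelling of the support. Let $S=\{g\in G:x(g)\neq 0\}$, so the hypothesis reads $\overline D(S)<\delta$. My first step is to use Lemma~\ref{bd} to replace the upper Banach density condition by a concrete finite witness: there exists a finite $F\subset G$ with $\overline D_F(S)<\delta$, which, after the routine bookkeeping for the shift $(gx)(h)=x(hg)$ (the number of non-zero entries of $gx|_F$ equals $|S\cap Fg|$), means that for every $g\in G$ the block $gx|_F$ contains fewer than $\delta|F|$ non-zero entries.

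Next, since $X$ is the orbit-closure of $x$ and $\Lambda^F$ is finite (so cylinders with domain $F$ are clopen), every block in $X_F$ is actually realised as some $gx|_F$. Consequently $X_F$ is contained in the family of blocks on $F$ over $\Lambda$ having at most $\delta|F|$ non-zero positions. I would then estimate this family by the standard two-step count: first choose the support of non-zero symbols among the subsets of $F$ of cardinality $\le\delta|F|$, then place a non-zero symbol at each support point. By Lemma~\ref{combinat} the number of possible supports is at most $2^{\Theta(\delta)|F|}$, and on each support there are at most $(|\Lambda|-1)^{\delta|F|}$ labellings, yielding
\[
|X_F|\le 2^{|F|\bigl(\Theta(\delta)+\delta\log(|\Lambda|-1)\bigr)}.
\]

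Finally, I would invoke Theorem~\ref{infimum} with this specific $F$ to conclude
\[
\h(x)=\h(X)\le \frac{\log|X_F|}{|F|}\le \Theta(\delta)+\delta\log(|\Lambda|-1),
\]
and then choose $\delta$ small enough that the right-hand side falls below $\eps$; this is possible because $\Theta(\delta)\to 0$ as $\delta\to 0$ and $\log(|\Lambda|-1)$ is a constant. No step presents a real difficulty; the only mildly delicate point — arguably the ``main obstacle'', such as it is — is recognising that Theorem~\ref{infimum} permits the entropy bound to be extracted from one single finite set rather than from a Følner sequence. This is precisely what allows the single witness $F$ produced by Lemma~\ref{bd} to be used directly, without any approximation argument along $(F_n)$.
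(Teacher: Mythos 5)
Your proof is correct and follows exactly the route the paper indicates for this lemma (the paper only sketches it, pointing to the count of the blocks $g\hat x|_F$ in the proof of Lemma~\ref{entest}, which is precisely your support-then-labelling estimate via Lemma~\ref{combinat}). The only cosmetic variation is that you extract the bound from a single witness set $F$ via Theorem~\ref{infimum}, whereas one could equally well note that $\overline D_{F_n}(S)\to\overline D(S)<\delta$ and run the same count along the F\o lner sequence using the definition of entropy directly; both are fine.
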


\section{The construction of an exact tiling}

In this section we construct an exact tiling of $G$ having ``well-invariant'' shapes and small entropy.
This is done in three steps: First we construct a quasitiling $\CT$ which is only $\eps$-disjoint and $(1-\eps)$-covering, then we modify it to a disjoint quasitiling $\CT^\circ$, which then is transformed
into an exact tiling $\CT^*$.
\smallskip

The following lemma is almost the same as \cite[I.\S 2. Theorem 6]{OW2}, differing from it in small details, in particular, our $(1-\eps)$-covering is defined in terms of lower Banach density.
Many ideas in our proof given below are the same as in \cite{OW2}, however we needed to add somewhat lengthy lower Banach density estimates.

\begin{lem}\label{quasitilings} Let $G$ be a countable amenable group with a F\o lner sequence $(F_n)$ of symmetric sets containing the unity. Given $\eps>0$, there exists a positive integer $r=r(\eps)$ such that for each positive integer $n_0$, there exists an $\eps$-disjoint, $(1-\eps)$-covering quasitiling $\CT$ of $G$ with $r$ shapes $\{F_{n_1},\dots,F_{n_r}\}$, where $n_0<n_1<\cdots<n_r$.
\end{lem}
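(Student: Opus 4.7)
The plan is to adapt the classical Ornstein--Weiss greedy packing construction, keeping careful track of lower Banach density throughout. I first pick an auxiliary $\eps' > 0$ (depending on $\eps$) and an integer $r = r(\eps)$ with $(1-\eps')^r < \eps$. Then I select shapes $F_{n_1}, \dots, F_{n_r}$ inductively from the F\o lner sequence, with $n_0 < n_1 < \cdots < n_r$, arranging that each $F_{n_{i+1}}$ is sufficiently $(F_{n_i}F_{n_i}^{-1}, \delta_i)$-invariant that Lemma~\ref{estim} will control the boundary terms arising at every later stage. The intuition is that the most invariant shape $F_{n_r}$ is used first to tile ``in bulk'', while the successively less invariant shapes then fill in the gaps.

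With the shapes fixed, I process them from $F_{n_r}$ down to $F_{n_1}$. At step $i$, Zorn's lemma supplies a maximal family of translates of $F_{n_{r-i+1}}$ that is $\eps$-disjoint with itself and with all tiles chosen in steps $1,\dots,i-1$; for each new tile $T$, I take $T^\circ$ to be the portion of $T$ disjoint from all $T'^\circ$ fixed previously. The union of centre sets so produced defines the quasitiling $\CT$, and $\eps$-disjointness holds by construction. The main technical claim, proved by induction on $i$, is that $\overline D(U_i) \le (1-\eps')^i$, where $U_i = G \setminus \bigcup\{T^\circ : T \text{ chosen by step } i\}$. The maximality in step $i$ gives $|F_{n_{r-i+1}}c \cap U_i| < (1-\eps)|F_{n_{r-i+1}}|$ for every $c \in G$; averaging this pointwise condition over a sufficiently invariant auxiliary F\o lner set and combining it with the inductive hypothesis shrinks the uncovered fraction multiplicatively by $1-\eps'$. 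After $r$ steps, $\overline D(U_r) \le (1-\eps')^r < \eps$, and since $\bigcup \CT \supset \bigcup T^\circ$, the desired $(1-\eps)$-covering follows.

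The principal obstacle, and the main difference from the original Ornstein--Weiss argument, is that the density estimate must be uniform across all shifts of large F\o lner sets --- one needs lower \emph{Banach} density for the covered set, not merely density against the invariant mean. To deal with this I will apply Lemma~\ref{bd} and reduce matters to proving, for every fixed finite $F$ and every $\xi > 0$, a uniform bound of the form $|Fg \cap U_i| \le \bigl((1-\eps')^i + \xi\bigr)|F|$ for all $g \in G$, provided $F$ is sufficiently invariant relative to the shapes. The error term $\xi$ absorbs the contribution of tiles whose $K$-cores fall outside $Fg$, controlled via Lemma~\ref{estim}; the chain of invariance parameters $(\delta_i)$ chosen at the outset is precisely what keeps these boundary effects uniformly small as $i$ runs from $1$ to $r$.
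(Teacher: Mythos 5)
Your overall architecture coincides with the paper's: fix $r$ with $(1-\eps')^r<\eps$, choose shapes along the F\o lner sequence subject to a hierarchy of invariance conditions, pack greedily (via Zorn's lemma) from the largest shape down to the smallest, and reduce the covering estimate to a uniform bound over translates $Fg$ via Lemma~\ref{bd}. Up to that level of description the plan is the right one.

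There is, however, a genuine gap in the inductive step. You propose to deduce $\overline D(U_i)\le(1-\eps')^i$ from two inputs: the inductive bound $\overline D(U_{i-1})\le(1-\eps')^{i-1}$ and the maximality consequence $|F_{n_j}c\cap U_i|<(1-\eps)|F_{n_j}|$ for \emph{every} $c\in G$ (here $j=r-i+1$). These two facts alone do not yield a multiplicative improvement: if the coverage accumulated before step $i$ happened to meet every translate $F_{n_j}c$ in at least $\eps|F_{n_j}|$ points, the empty family of new tiles would already be maximal, your pointwise condition would hold with $U_i=U_{i-1}$, and the uncovered density would not decrease. What forces the improvement is a \emph{localized} use of maximality: one restricts to centers $c$ in the ``deep uncovered region,'' i.e.\ those $c$ for which the entire translate $F_{n_j}c$ is disjoint from the previously covered set $H$; for such $c$ the $\eps$-fraction of coverage forced by maximality must consist of \emph{newly added} tiles, and a double count over pairs $(f,c)\in F_{n_j}\times N_g$ (with $N_g$ the deep uncovered part of $F_ng$) produces at least $\eps|N_g|$ points of genuinely new coverage inside $Fg$. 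One must then show $|N_g|$ is nearly as large as the whole uncovered part of $F_ng$, i.e.\ that $F_{n_j}^{-1}H$ exceeds $H$ only negligibly. This is exactly where both ingredients you leave unused come in: the invariance of \emph{all} larger shapes under left multiplication by $F_{n_j}^{-1}$ (so the conditions must link each smaller shape to every larger one, not only consecutive pairs as you state), and the $\eps$-disjointness of the already-placed tiles, which bounds the sum of their cardinalities by $\frac1{1-\eps}$ times the cardinality of their union and hence controls the total inflation. Your sketch attributes the role of the $\delta_i$'s solely to boundary effects at the edge of $Fg$ and never invokes $\eps$-disjointness in the density estimate, so the central mechanism of the multiplicative gain is missing. (A minor point: tracking the complement of $\bigcup T^\circ$ rather than of $\bigcup\CT$ proves something slightly stronger than needed and is harmless, but it does not repair the issue above.)
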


\begin{proof}
Find $r$ such that $(1-\frac\eps2)^r<\eps$. This is going to be the cardinality of the family of shapes.
Choose integers $n_1=n_0+1, n_2,\dots, n_r$ so that they increase and for each pair of indices $j<i$, $j,i\in\{1,2,\dots, r\}$ the set $F_{n_i}$ is $(F_{n_j},\delta_j)$-invariant, where $\delta_j$ will be specified later. We let $\CS(\CT) = \{F_{n_j}: j=1,\dots,r\}$ be our family of shapes. With this choice, the assertions about the shapes and their number are fulfilled. It remains to construct the corresponding center sets $C (F_{n_j})$ so as to satisfy $\eps$-disjointness and $(1-\eps)$-covering of $\CT$.

We proceed by induction over $j$ decreasing from $r$ to 1.

Consider the collection of all such subsets $C\subset G$ that $\{F_{n_r}c:c\in C\}$ is an $\eps$-disjoint quasitiling (with one shape). As easily verified, when ordered by inclusion this collection satisfies the hypothesis of Zorn's Lemma (the disjoint family for the union of a chain can be found in a limit procedure).  Thus, there exists a maximal element in our collection (note that it is nonempty). We now pick one such maximal element and denote it $C_r$. At this point we define $C (F_{n_r})$ (the center set for the shape $F_{n_r}$) as $C_r$. We let $H_r = F_{n_r}C_r$ denote the part of the group covered by the so far constructed quasitiling. In order to estimate $\underline D(H_r)$ from below we will estimate $\underline D_{F_{n_r}}(H_r)$. If $g\in C_r$ then $F_{n_r}g$ is contained in $H_r$,
hence $\frac{|H_r\cap F_{n_r}g|}{|F_{n_r}|}=1$. For $g\notin C_r$, suppose that this ratio is strictly smaller than $\eps$. This implies that $F_{n_r}g$ can be added to the $\eps$-disjoint family $\{F_{n_r}c: c\in C_r\}$, contradicting the maximality of $C_r$.
That is, we have proved that for any $g\in G$, $\frac{|H_r\cap F_{n_r}g|}{|F_{n_r}|}\ge\eps$, i.e.,
that $\underline D_{F_{n_r}}(H_r)\ge\eps$.
Thus $\underline D(H_r)\ge\eps$ (which is strictly larger than $\frac\eps2=1-(1-\frac\eps2)^1)$.

Fix some $j\in\{1,2,\dots,r-1\}$ and suppose we have constructed an $\eps$-disjoint quasitiling $\{F_{n_i}c:j+1\le i\le r,\ c\in C_i\}$ (with $C_i$ abbreviating $C(F_{n_i})$, the center set for the shape $F_{n_i}$), whose union
$$
H_{j+1} = \bigcup_{i=j+1}^r\ F_{n_i}C_i
$$
has lower Banach density strictly larger than $1-(1-\frac\eps2)^{r-j}$ (this is our inductive hypothesis on $H_{j+1}$ and it is fulfilled for $H_r$). We need to go one step further in our ``decreasing induction'', i.e., add a center set $C_j$ for the shape $F_{n_j}$. Consider the collection of all subsets $C\subset G$ such that the family $\{F_{n_i}c:j+1\le i\le r,\ c\in C_i\}\cup\{F_{n_j}c:c\in C\}$ is an $\eps$-disjoint quasitiling (this includes that $C$ is disjoint from $\bigcup_{i=j+1}^rC_i$). As before, when ordered by inclusion, this collection satisfies the hypothesis of Zorn's Lemma. Thus, there exists a maximal element $C_j$ (this time possibly empty). We set $C (F_{n_j})= C_j$ and denote
$$
H_j = \bigcup_{i=j}^r\ F_{n_i}C_i.
$$

Our goal is to estimate from below the lower Banach density of $H_j$. By Lemma \ref{bd}, it suffices to estimate $\underline D_F(H_j)$ for just one finite set $F$ which we will define in a moment. Define $B=\Bigl(\bigcup_{i=j+1}^rF_{n_i}F^{-1}_{n_i}\Bigr)F_{n_j}$. Clearly, $B$ contains $F_{n_j}$
(hence the unity), and, as easily verified, it has the following property:
\begin{itemize}
\item
whenever $F^{-1}_{n_j}F_{n_i}c\cap A\neq\emptyset$, for some
$i\in\{j+1,\dots,r\}$, $c\in G$ and $A\subset G$, then $F_{n_i}c\subset BA$.
\end{itemize}
Let $n$ be so large that $F_n$ is $(B,\delta_j)$-invariant and that $\underline D_{F_n}(H_{j+1})>1-(1-\frac\eps2)^{r-j}$ (the latter is possible due to the assumption on $\underline D(H_{j+1})$). Now we define the aforementioned set $F$ as $F=F_{n_j}F_n$.

Fix some $g\in G$ and define
$$
\alpha_g = \frac{|H_{j+1}\cap F_ng|}{|F_n|}  \text{ \ \ and \ \ } \beta_g = \frac{|H_{j+1}\cap BF_ng|}{|BF_n|}.
$$
Notice that
\begin{equation}\label{minus}
\alpha_g \ge \underline D_{F_n}(H_{j+1}) > 1-(1-\tfrac\eps2)^{r-j}.
\end{equation}
Also, we have
\begin{align}
\beta_g &\ge \frac{|H_{j+1}\cap F_ng|}{(1+\delta_j)|F_n|}=\frac{\alpha_g}{1+\delta_j}\label{zero} \ ,   \text{ \ \ and }\\
\beta_g &\le \frac{|H_{j+1}\cap F_ng|+|BF_ng\setminus F_ng|}{|F_n|}\le \alpha_g + \delta_j.
\end{align}
Note that since $F_{n_j}\subset B$ and $F_n$ is $(B,\delta_j)$-invariant, $F_n$ is automatically
$(F_{n_j},\delta_j)$-invariant. Thus
\begin{equation}\label{jeden}
\frac{|H_{j+1}\cap Fg|}{|F|}\ge \frac{|H_{j+1}\cap F_ng|}{(1+\delta_j)|F_n|}= \frac{\alpha_g}{1+\delta_j}\ge\frac{\beta_g-\delta_j}{1+\delta_j}.
\end{equation}

Consider only these finitely many component sets $F_{n_i}c$ of $H_{j+1}$ (i.e., with $i\in\{j+1,\dots,r\},\ c\in C_i$) for which $F^{-1}_{n_j}F_{n_i}c$ has a nonempty intersection with $F_ng$, and denote by $E_g$ the union of so selected components $F_{n_i}c$. By the property of $B$ (with $A=F_ng$), $E_g$ is a subset of $BF_ng$ (and also of $H_{j+1}$), so
\begin{equation}\label{niewiem}
|E_g|\le |H_{j+1}\cap BF_ng|= \beta_g|BF_n|\le\beta_g(1+\delta_j)|F_n|.
\end{equation}
Each of the selected components $F_{n_i}c\subset E_g$ is $(F^{-1}_{n_j},\delta_j)$-invariant (each $F_{n_j}$ is symmetric), hence, when multiplied on the left by $F^{-1}_{n_j}$ it can gain at most $\delta_j|F_{n_i}c|$ new elements. Thus the set $E_g$, when multiplied on the left by $F^{-1}_{n_j}$, can gain at most $\delta_j\sum_{F_{n_i}\!c\subset E_g}|F_{n_i}c|$ new elements. On the other hand, denoting by $(F_{n_i}c)^\circ$ the pairwise disjoint sets (contained in respective sets $F_{n_i}c$) as in the definition of $\eps$-disjointness, we also have
$$
\sum_{F_{n_i}\!c\subset E_g}|F_{n_i}c|\le \frac1{1-\eps}\sum_{F_{n_i}\!c\subset E_g}|(F_{n_i}c)^\circ| = \frac1{1-\eps}\Bigl|\bigcup_{F_{n_i}\!c\subset E_g}(F_{n_i}c)^\circ\Bigr|\le \frac1{1-\eps}|E_g|.
$$
Combining this with the preceding statement, we obtain that the set $E_g$, when multiplied on the left by $F^{-1}_{n_j}$, can gain at most $\frac{\delta_j}{1-\eps}|E_g|$ new elements, which is less than $2\delta_j|E_g|$ (we can assume that $\eps<\frac12$).
Denote $H'_{j+1} = F_{n_j}^{-1}H_{j+1}$. By the choice of the components included in $E_g$, the set $F^{-1}_{n_j}E_g$ contains all of $H'_{j+1}\cap F_ng$. Thus, using $(1+2\delta_j)\le (1+\delta_j)^2$ and \eqref{niewiem}, we obtain that
\begin{equation*}
|H'_{j+1}\cap F_ng|\le |F^{-1}_{n_j}E_g| \le (1+2\delta_j)|E_g| \le (1+\delta_j)^3\beta_g|F_n|.
\end{equation*}
Let $N_g = F_ng\setminus H'_{j+1}$. By the above inequality, we know that
\begin{equation}\label{dwa}
|N_g|\ge \bigl(1-(1+\delta_j)^3\beta_g\bigr)|F_n|\ge \bigl(1-(1+\delta_j)^3\beta_g\bigr)\tfrac{|F|}{1+\delta_j},
\end{equation}
where the last inequality follows from $(F_{n_j},\delta_j)$-invariance of $F_n$.

For each $c\in N_g$ we have either $c\in C_j$ and then $\frac{|H_j\cap F_{n_j}c|}{|F_{n_j}|}=1$, or
$\frac{|H_j\cap F_{n_j}c|}{|F_{n_j}|}\ge\eps$ (otherwise $c$ could be added to $C_j$ contradicting its maximality; note that $N_g$ is disjoint from $\bigcup_{i=j+1}^rC_i$). In either case
$|H_j\cap F_{n_j}c|\ge\eps|F_{n_j}|$. This implies that there are at least $\eps|N_g||F_{n_j}|$ pairs $(f,c)$ with $f\in F_{n_j}, c\in N_g$ such that $fc\in H_j$. This in turn implies that there exists at least one $f\in F_{n_j}$ for which
\begin{equation}\label{trzy}
|H_j\cap fN_g|\ge \eps|N_g|.
\end{equation}

Notice that $fN_g$ is contained in $Fg$ (because $N_g\subset F_ng$ and $f\in F_{n_j}$) and disjoint from $H_{j+1}$ ($N_g$ is disjoint from $H'_{j+1}$ which contains $f^{-1}H_{j+1}$). Thus we can estimate, using \eqref{jeden}, \eqref{dwa} and \eqref{trzy}:
\begin{eqnarray*}
\frac{|H_j\cap Fg|}{|F|}&\ge & \frac{|H_{j+1}\cap Fg| + |H_j\cap fN_g|}{|F|}\\
&= & \frac{|H_{j+1}\cap Fg|}{|F|}  + \frac{|H_j\cap fN_g|}{|N_g|}\frac{|N_g|}{|F|}\\
&\ge & \frac{\beta_g-\delta_j}{1+\delta_j} + \eps\frac{1-(1+\delta_j)^3\beta_g}{1+\delta_j}.
\end{eqnarray*}


The rest of the proof is elementary calculus. Both terms in the last expression are linear functions of $\beta_g$, the first one
with positive and large slope $\frac1{1+\delta_j}$, the other with negative but small slope $-\eps(1+\delta_j)^2$. Jointly,
the function increases with $\beta_g$. So, we can replace $\beta_g$ by any smaller value, for instance, by $\frac{1-(1-\frac\eps2)^{r-j}}{1+\delta_j}$ (see \eqref{minus} and \eqref{zero}), to obtain
$$
\frac{|H_j\cap Fg|}{|F|}> \frac{1-(1-\tfrac\eps2)^{r-j}}{(1+\delta_j)^2}-\frac{\delta_j}{1+\delta_j} + \eps\bigl(\tfrac1{1+\delta_j}-(1+\delta_j)(1-(1-\tfrac\eps2)^{r-j})\bigr).
$$
Now notice, that if we replace the undivided occurrence of $\eps$ by $\frac{3\eps}4$, we make the entire expression smaller by
some positive value (independent of $g$). On the other hand, if $\delta_j$ is very small and we remove it completely from the
expression, we will perhaps enlarge it, but very little. We now specify $\delta_j$ to be so small, that if we replace $\eps$ by $\frac{3\eps}4$ and remove $\delta_j$ completely, then the expression will become smaller. With such a choice of $\delta_j$ we have
$$
\frac{|H_j\cap Fg|}{|F|}> 1-(1-\tfrac\eps2)^{r-j} +\frac{3\eps}4(1-\tfrac\eps2)^{r-j} = 1-(1-\tfrac\eps2)^{r-j+1} + \xi,
$$
where $\xi>0$ does not depend on $g$. Taking infimum over all $g\in G$ we get, by Lemma \ref{bd},
$$
\underline D(H_j)\ge \underline D_F(H_j) > 1-(1-\tfrac\eps2)^{r-j+1},
$$
and the inductive hypothesis has been derived for $j$.

Once the induction reaches $j=1$ we get that the lower Banach density of $H=H_1$ is larger than $1-(1-\tfrac\eps2)^r$ which,
by the choice of $r$, is larger than $1-\eps$. This concludes the proof of the lemma.
\end{proof}

The next lemma and the following theorem contain our key passage from quasitilings to (exact) tilings.

\begin{lem}\label{disjoint_tilings}
Fix $\eps>0$ and a finite set $K\subset G$. There exists a disjoint $(1-\eps)$-covering quasitiling $\CT^\circ$ of $G$, such that every shape of $\CT^\circ$ is $(K,\eps)$-invariant and
$\h(\CT^\circ)<\eps$.
\end{lem}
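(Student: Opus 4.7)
The plan is to start from the quasitiling produced by Lemma \ref{quasitilings} and sharpen it to the desired form by a single modification step. Let $\delta_0=\delta_0(K,\eps)>0$ be the parameter supplied by Lemma \ref{ben}, so that any set which is $(K,\delta_0)$-invariant and differs from a reference set by at most a $\delta_0$-fraction becomes $(K,\eps)$-invariant. I choose $\eps'\in(0,\tfrac12)$ small enough that $(1-\eps')^2\ge 1-\eps$, $\eps'\le\delta_0$, and $\Theta(\eps')\le\eps/6$, where $\Theta$ is the function from Lemma \ref{combinat}. Let $r=r(\eps')$ be the integer from Lemma \ref{quasitilings}, and let $\delta^{*}=\delta(r,\eps/3)>0$ be the threshold from Lemma \ref{entest}. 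By Lemma \ref{folner}, I pick $n_0$ so large that every F\o lner set $F_n$ with $n\ge n_0$ is $(K,\delta_0)$-invariant and satisfies $(1-\eps')|F_n|\ge 1/\delta^{*}$. Applying Lemma \ref{quasitilings}, I obtain an $\eps'$-disjoint, $(1-\eps')$-covering quasitiling $\CT$ with shapes $F_{n_1},\dots,F_{n_r}$, $n_0<n_1<\cdots<n_r$.

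For each tile $T\in\CT$ I fix the disjoint $(1-\eps')$-subset $T^\circ\subset T$ witnessing $\eps'$-disjointness, and then reassign a new center $c_{\mathrm{new}}\in T^\circ$ according to the deterministic rule of Remark \ref{centers}, taking $F:=\bigcup_{i=1}^r F_{n_i}$. The resulting family $\CT^\circ$ is a genuinely disjoint quasitiling, and by Lemma \ref{cov1} it is $(1-\eps')^2\ge(1-\eps)$-covering. To check the invariance of shapes, note that if the original tile is $T=F_{n_i}c$, then the new shape attached to $T^\circ$ has the form
\[
T^\circ c_{\mathrm{new}}^{-1}=U\cdot u_0(U)^{-1},
\]
where $U:=T^\circ c^{-1}\subset F_{n_i}$ satisfies $|F_{n_i}\setminus U|\le\eps'|F_{n_i}|$ and $u_0(U)\in U$ is the element smallest with respect to the enumeration of $F$ fixed in Remark \ref{centers}. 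Since $(K,\eps)$-invariance is preserved by translations, and $F_{n_i}$ is $(K,\delta_0)$-invariant with $\eps'\le\delta_0$, Lemma \ref{ben} applied to $U\subset F_{n_i}$ forces the new shape to be $(K,\eps)$-invariant.

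The remaining task, and the main delicate step, is to estimate $\h(\CT^\circ)$ via Lemma \ref{entest}. I partition the set of new shapes into $r$ classes $\CS_1^\circ,\dots,\CS_r^\circ$ according to the original $F_{n_i}$ from which they arose (breaking ties arbitrarily to make the classes disjoint). The crucial observation is that, thanks to the deterministic center-selection rule, every new shape in $\CS_i^\circ$ is uniquely of the form $U\cdot u_0(U)^{-1}$ and is therefore determined by $U$ alone, with no extra freedom coming from the original center $c$. Hence Lemma \ref{combinat} yields
\[
|\CS_i^\circ|\le 2^{\Theta(\eps')|F_{n_i}|}\le 2^{(\eps/6)|F_{n_i}|}\le 2^{(\eps/3)s_i},
\]
where $s_i:=\min\{|S|:S\in\CS_i^\circ\}\ge(1-\eps')|F_{n_i}|\ge\tfrac12|F_{n_i}|$, and also $s_i\ge 1/\delta^{*}$ by the choice of $n_0$. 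Since a disjoint quasitiling is in particular $\tfrac14$-disjoint, Lemma \ref{entest} applied with $\eps/3$ in place of $\eps$ gives $\h(\CT^\circ)<3\cdot(\eps/3)=\eps$, which concludes the proof. The subtlety worth emphasising is that without the deterministic, finite-horizon reassignment of centers in Remark \ref{centers}, the count of shapes in each class would pick up an additional factor of $|F_{n_i}|$ (coming from the choice of center), which is harmless for entropy but would have to be absorbed carefully; the clean bookkeeping above is what makes all the parameter choices fit together.
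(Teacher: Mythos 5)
Your proposal is correct and follows essentially the same route as the paper: apply Lemma \ref{quasitilings} with a small auxiliary parameter, pass to the disjoint family $\CT^\circ$ with centers reassigned via Remark \ref{centers}, get the covering from Lemma \ref{cov1}, the invariance from Lemma \ref{ben}, and the entropy from Lemma \ref{entest} by bounding each class $\CS_i^\circ$ by the number of $(1-\eps')$-subsets of $F_{n_i}$ via Lemma \ref{combinat}. The only cosmetic difference is that the paper ensures disjointness of the classes by passing to a subsequence with $|F_{n+1}|>2|F_n|$, whereas you break ties arbitrarily, which works equally well.
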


\begin{proof}
Let $\xi$ be such that $(1-\xi)^2>1-\eps$ and $\frac{\Theta(\xi)}{1-\xi}\le\frac\eps3$,
where $\Theta(\cdot)$ is defined in Lemma \ref{combinat}. Let $r=r(\xi)$ (as defined in Lemma \ref{quasitilings}) and $\delta=\delta(r,\frac\eps3)$ (as defined in Lemma \ref{entest}).

Let $\CT$ be the quasitiling delivered by Lemma \ref{quasitilings} for $\xi$ (in the role of $\eps$)
and $n_0$ so large that $F_{n}$ is $(K,\xi)$-invariant and $|F_n|>\frac1{\delta(1-\xi)}$ for each $n\ge n_0$.

We will show that the disjoint quasitiling $\CT^\circ$ (as in the definition of $\xi$-disjointness) is good. First of all, by Lemma \ref{cov1}, $\CT^\circ$ is $(1-\xi)^2$-covering (hence also $(1-\eps)$-covering), and by Lemma \ref{ben}, if $\xi$ is small enough, the shapes of $\CT^\circ$ are $(K,\eps)$-invariant. Next, we will verify that $\CT^\circ$ satisfies the assumptions of Lemma \ref{entest} (with $\frac\eps3$ in place of $\eps$).

For $i=1,2,\dots, r$, let $\CS_i$ be the family of shapes of the tiles $T^\circ$ such that $T$ has the shape
$F_{n_i}$. By choosing a subsequence if necessary, we can assume that the sizes of the F\o lner sets satisfy $|F_{n+1}|> 2|F_n|$, which (together with $\xi<\frac12$) ensures that the above families $\CS_i$ are disjoint. The minimal size $s_i$ of a shape in $\CS_i$ is at least $|F_{n_i}|(1-\xi)$ which is larger than $\frac1\delta$, as required. The cardinality of $\CS_i$ is estimated by the number of all $(1-\xi)$-subsets of $F_{n_i}$ (the new center for every such subset is determined by Remark \ref{centers}), that is, by $2^{\Theta(\xi)|F_{n_i}|}\le 2^{\frac{\Theta(\xi)}{1-\xi}s_i}\le 2^{\frac\eps3 s_i}$. Now the application of Lemma \ref{entest} ends the proof.
\end{proof}

\begin{thm}\label{exact_tilings}
Fix $\eps>0$ and a finite set $K\subset G$. There exists an (exact) tiling $\CT^*$ of $G$, such that every shape of $\CT^*$ is $(K,\eps)$-invariant, and $\h(\CT^*)<\eps$.
\end{thm}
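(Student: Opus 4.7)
The plan is to apply Lemma~\ref{disjoint_tilings} to produce a disjoint quasitiling $\CT^\circ$ whose uncovered complement $U=G\setminus\bigcup\CT^\circ$ has tiny upper Banach density, and then to redistribute the points of $U$ into the existing tiles by a local rule, producing an exact tiling $\CT^*$. Because only a small fraction of points is absorbed into each tile, the enlarged shapes remain $(K,\eps)$-invariant by Lemma~\ref{ben}; and because the redistribution rule is a sliding block code with finite horizon, $X_{\CT^*}$ is a topological factor of $X_{\CT^\circ}$ and therefore inherits entropy below $\eps$.

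Concretely, let $\delta=\delta(K,\eps)$ be the constant from Lemma~\ref{ben}, choose $\eps'>0$ much smaller than $\delta$, and apply Lemma~\ref{disjoint_tilings} with parameters $(K,\eps')$ (with the initial F\o lner index $n_0$ taken sufficiently large) to obtain a disjoint, $(1-\eps')$-covering quasitiling $\CT^\circ$ with $(K,\eps')$-invariant shapes drawn from $\{F_{n_1},\dots,F_{n_r}\}$ and $\h(\CT^\circ)<\eps'$. Since $\underline D(\bigcup\CT^\circ)\ge 1-\eps'>0$, Lemma~\ref{bd} yields a finite symmetric set $L\subset G$ with $\underline D_L(\bigcup\CT^\circ)>0$, so that $Lg\cap\bigcup\CT^\circ\ne\emptyset$ for every $g\in G$. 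Fix an enumeration of $L$. For each $u\in U$, let $\pi(u)$ be the unique tile of $\CT^\circ$ containing the $L$-smallest element of $Lu\cap\bigcup\CT^\circ$; set $U_T=\pi^{-1}(T)$ and $T^*=T\cup U_T$, retaining the original centers. Then $\CT^*=\{T^*:T\in\CT^\circ\}$ is a partition of $G$.

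For invariance, each $u\in U_T$ lies in $L^{-1}T$, so $U_T\subseteq L^{-1}T\setminus T$. Using $T\subseteq F_{n_i}c$ with $|T|\ge(1-\eps')|F_{n_i}|$, together with $(L,\xi)$-invariance of $F_{n_i}$ and the bound $\overline D_{F_{n_i}}(U)\le 2\eps'$ (which follows from $\overline D(U)\le\eps'$ and the convergence $\overline D_{F_n}(U)\to\overline D(U)$ stated in Lemma~\ref{bd}, provided $n_0$ is large enough), a direct computation yields $|U_T|\le C\eps'|T|$ with a constant $C$ close to $3$; then $|T^*\triangle T|/|T|<\delta$, and Lemma~\ref{ben} makes every shape of $\CT^*$ $(K,\eps)$-invariant. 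For entropy, the rule $x_{\CT^\circ}\mapsto x_{\CT^*}$ is a sliding block code with finite horizon $LL^{-1}F_{n_r}$, since this window suffices to decide whether $g$ is a center, to identify the original shape $S$, and to determine via the $L$-neighborhood of each candidate $u\in L^{-1}S\cdot g$ whether $u$ is absorbed into the tile at $g$. Hence $X_{\CT^*}$ is a topological factor of $X_{\CT^\circ}$ and $\h(\CT^*)\le\h(\CT^\circ)<\eps'<\eps$. The principal obstacle is the slightly circular dependence between the set $L$ (which is determined by $\CT^\circ$) and the $L$-invariance required of the shapes $F_{n_i}$ (which are fixed when $\CT^\circ$ is built); this is resolved by first establishing, via Lemma~\ref{bd}, a coarse a priori bound on the admissible scale of $L$ in terms of $\eps'$ alone, and then requiring $n_0$ in Lemma~\ref{disjoint_tilings} to exceed that scale by a wide margin so that all shapes are automatically $L$-invariant and satisfy the required $\overline D_{F_{n_i}}(U)$ bound.
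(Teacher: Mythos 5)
There is a genuine gap at the heart of your redistribution step: your rule gives no control on $|U_T|$ for an individual tile. You assign each uncovered point $u$ to the tile containing the $L$-smallest element of $Lu\cap\bigcup\CT^\circ$, but nothing prevents a large number of uncovered points from all electing the same small tile. The bound $\overline D(U)\le\eps'$ is a statement about large windows only; locally, a tile $T$ of the smallest shape $F_{n_1}$ can sit at the edge of a hole of size comparable to the largest shape $F_{n_r}$, and your rule may dump a set $U_T$ with $|U_T|\gg|T|$ into it, destroying $(K,\eps)$-invariance. Your attempted estimate fails for two compounding reasons. First, the containment $U_T\subseteq L^{-1}T\setminus T$ only helps if the shapes are $(L^{-1},\xi)$-invariant, but the set $L$ with $\underline D_L(\bigcup\CT^\circ)>0$ cannot be chosen a priori: Lemma~\ref{bd} produces such an $L$ only among sufficiently invariant F\o lner sets, and since the gaps of $\CT^\circ$ live on the scale of $F_{n_r}$, any admissible $L$ must dwarf \emph{all} the shapes, so $|L^{-1}T\setminus T|$ is not $O(\eps'|T|)$. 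The circularity you flag is therefore real and is not resolved by taking $n_0$ large, because $L$ depends on the finished quasitiling, not on $\eps'$ alone. Second, the claim $\overline D_{F_{n_i}}(U)\le 2\eps'$ is false for the fixed shapes: by Lemma~\ref{bd} one only has $\overline D_{F}(U)\ge\overline D(U)$ for every finite $F$, with the infimum attained in the limit along windows large relative to the structure of $U$, i.e., large relative to $F_{n_r}$; on windows of size $|F_{n_i}|$ the local density of $U$ can be close to $1$.

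The missing idea is precisely what the paper supplies with the marriage theorem: one must reserve, inside each tile $T^\circ$, a designated set of ``slots'' $A(T^\circ)\cup A'(T^\circ)$ of cardinality about $4\gamma|T^\circ|$, and then build an \emph{injective} map $\Phi$ from the uncovered set $B$ into $A\cup A'$ with $\Phi(b)\in Fb$; injectivity into the slot sets is what caps the number of points absorbed by each tile at $O(\gamma|T^\circ|)$, and Theorem~\ref{marriage} is what makes such an injection exist. Your entropy argument inherits the same problem: the paper shows that this matching cannot be realized entirely by a finite-horizon code (the part $\phi'$ on the residual set $B'$ is not locally determined), which is why the proof introduces a second quasitiling $\CT'$ with ``improved disjointness'' to localize most of the matching, and encodes the non-local remainder in an auxiliary symbolic element $y$ of small upper Banach density of non-zero symbols, so that $\CT^*$ is a factor of the joining of $\CT^\circ$, $\CT'$ and $y$ with entropy below $\gamma+\frac\eps4+\frac\eps4$. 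A simple factor map $X_{\CT^\circ}\to X_{\CT^*}$ as you propose is not available.
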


\begin{proof} Let $\CT^\circ$ be the disjoint quasiltiling delivered by the preceding lemma for the parameters $\gamma$ and $K$, where $\gamma<\min\{\frac12,\frac\eps2,\frac\delta6\}$ with $\delta$ specified
in Lemma \ref{ben} for $\eps$ and $K$. In the following steps of the construction we will modify this quasitiling so it becomes a tiling (i.e., it will cover all of~$G$).

In every shape $S$ of $\CT^\circ$ we choose two disjoint subsets, $A(S)$ and $A'(S)$, each of cardinality $\lceil2\gamma\abs{S}\rceil$ (which, we can assume, is smaller than $3\gamma|S|$). Next, if $T^\circ=Sc$ is a tile of $\CT^\circ$, we let $A(T^\circ)=A(S)c$ (and analogously for $A'(T^\circ)$). The unions of these latter sets over the entire tiling yield two disjoint sets $A$ and $A'$, each having lower Banach density at least $2\gamma(1-\gamma)>\gamma$ (see Lemma \ref{cov1}). Let $B$ denote the set of elements of $G$ that are not covered by the tiles of $\CT^\circ$. Directly, since $\CT^\circ$ is $(1-\gamma)$-covering, the upper Banach density of $B$ is less than $\gamma$.


Let $F$ be a finite, symmetric subset of $G$ such that the proportion of elements of $A$ in any translate $Fg$ is at least $\gamma$, the same holds for $A'$, and the proportion of elements of $B$ in $Fg$ is less than $\gamma$. Let $\xi<\frac\eps4$ be so small that any symbolic dynamical system with the alphabet $F\cup\{0\}$ and with the upper Banach density of non-zero symbols smaller than $\xi$ has entropy less than $\frac\eps4$ (see Lemma \ref{symbolic_small_entropy}).

Using Lemma \ref{disjoint_tilings} again (with different parameters), we obtain a disjoint, $(1-\xi)$-covering quasitiling $\CT'$ with entropy less than $\xi$ (hence less than $\frac\eps4$). Moreover, we can assume an ``improved disjointness'': if $T'_1$ and $T'_2$ are different tiles of $\CT'$ then $FT'_1\cap FT'_2=\emptyset$; this can be achieved (using Lemmas \ref{estim} and \ref{cov1}) by requesting the disjoint quasitiling to be $(1-\xi')$-covering, with $(F,\xi')$-invariant tiles and entropy less than $\xi'$ (for a small $\xi'$), and then replacing the tiles by their $F$-cores with the centers determined by Remark \ref{centers} (such modification does not increase the entropy because it produces a topological factor by a finite horizon algorithm).

Let $T'$ be a tile of $\CT'$. We define a relation $R$ between $B\cap T'$ and $A\cap FT'$: $bRa$ if and only if $a\in Fb$. By the definition of $F$, for every $b$ there are at least $\gamma\abs{F}$ elements $a$ such that $bRa$, and for every $a$ there are at most $\gamma\abs{F}$ elements $b$ such that $bRa$. By the marriage theorem (Theorem \ref{marriage}), there exists an injective mapping $\phi_{T'}$ from $B\cap T'$ into $A\cap FT'$ such that $\phi_{T'}(b)\in Fb$ for every $b$ in the domain. The ``improved disjointness'' implies that not only domains, but also images, of the maps $\phi_{T'}$ are disjoint, so that uniting the graphs of $\phi_{T'}$ we obtain an injective map $\phi:B\cap\bigcup\CT'\to A$. Moreover, we can arrange that whenever $T'=Sc$ and $T''=Sc'$ (i.e., two tiles of $\CT'$ have the same shape $S$), $B\cap T'' = (B\cap T')c^{-1}c'$ and $A\cap FT'' = (A\cap FT')c^{-1}c'$, then $\phi_{T''}(b) = \phi_{T'}(b{c'}^{-1}c)c^{-1}c'$ (for every $b\in B\cap T''$), i.e., that $\phi_{T'}$ depends only on how $T'$ and $FT'$ contain and intersect the tiles of $\CT^\circ$. In this manner, the map $\phi$ is determined by $\CT'$ and $\CT^\circ$ via a finite horizon algorithm.

Further, let $B'$ be the remaining part of $B$ (not covered by the tiles of $\CT'$). Again, we define a relation (which we will again denote by $R$) between $B'$ and $A'$, by the same formula: $bRa$ if and only if $a\in Fb$. As before, by the definition of $F$, the assumptions of the marriage theorem are fulfilled, yielding another injective mapping $\phi'$ from $B'$ into $A'$ with $\phi'(b\,')\subset Fb\,'$ (this map, however, is not necessarily determined by a finite horizon algorithm). Uniting the maps $\phi$ and $\phi'$ (in terms of uniting their graphs) we obtain an injective mapping $\Phi$ from $B$ into $A\cup A'$, with the property that for every $b$, $\Phi(b)\in Fb$.

We can now define the desired tiling $\CT^*$: every tile of $\CT^*$ will have the form $T^* = T^\circ\cup \set{b\in B: \Phi(b)\in T^\circ}$ for some $T^\circ\in \CT^\circ$. We define the center of this new tile to be the same as the center for $T^\circ$. Each shape of $\CT^\circ$ may produce many new shapes of $\CT^*$, however, since $\set{b\in B: \Phi(b)\in T^\circ}\subset FT^\circ$, the variety of new shapes remains finite.
The center sets for each new shape are then determined automatically. By the construction of $\Phi$, the tiles of $\CT^*$ are disjoint and cover all of $G$. The added set $\set{b\in B: \Phi(b)\in T^\circ}$ has cardinality at most that of $A(T^\circ)\cup A'(T^\circ)$, hence $|T^*|\le |T^\circ|(1+6\gamma)$. Therefore, by Lemma~\ref{ben} (and the selection of $\gamma$), $T^*$ is $(K,\eps)$-invariant.

It remains to show that $\CT^*$ has entropy strictly less than $\eps$. Consider the symbolic element $y\in (F\cup\{0\})^G$ defined as follows: $y(b)=g\in F$ if $b\in B'$ and $\phi'(b)=gb$, and $y(b)=0$ for $b\notin B'$. Since $B'$ has upper Banach density less than $\xi$, the upper Banach density of non-zero symbols in $y$ is also less than $\xi$. Thus the topological entropy of $y$ is less than $\frac\eps4$. Now observe that the tiling $\CT^*$ is determined by the quasitilings $\CT^\circ$, $\CT'$ and the contents of $y$, via a finite horizon algorithm ($y$ is not determined by $\CT^\circ$, $\CT'$ via a finite horizon algorithm, but once we acquire the information coming from $y$, the finite horizon statement holds). Thus $\CT^*$ is a topological factor of a topological joining between $\CT^\circ$ and $\CT'$, and $y$. Therefore the entropy of $\CT^*$ is indeed less than $\gamma+\frac\eps4+\frac\eps4<\eps$.
\end{proof}

\section{A congruent sequence of tilings with entropy zero}

In this section we strengthen the preceding result by obtaining exact tilings (with arbitrarily ``well-invariant'' shapes) which have topological entropy zero. This is done in two steps, via constructing a sequence of exact tilings $(\tilde\CT_k)_{k\ge1}$ with entropies tending to zero, and which is  \emph{congruent},  i.e., such that, for each $k\ge 1$, every tile of $\tilde\CT_{k+1}$ equals a union of tiles of $\tilde\CT_k$. Next, we transform this sequence into $(\overline\CT_k)_{k\ge1}$, in which every tiling is a topological factor of its successor, and hence all of them have entropy zero.

\begin{lem}\label{sqt}
Fix a converging to zero sequence $\eps_k> 0$ and a sequence $K_k$ of finite subsets of $G$.
There exists a congruent sequence of tilings $\tilde\CT_k$ of $G$ such that the shapes of
$\tilde\CT_k$ are $(K_k,\eps_k)$-invariant and $\h(\tilde\CT_k)<\eps_k$.
\end{lem}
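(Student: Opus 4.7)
We construct $\tilde\CT_k$ by induction on $k$. For $k=1$ we take $\tilde\CT_1$ directly from Theorem~\ref{exact_tilings} applied with $(K_1,\eps_1)$. For the inductive step, assume $\tilde\CT_k$ is already built, and let $M_k=|\CS(\tilde\CT_k)|$ and $E_k$ be a finite symmetric subset of $G$ containing the unity together with all shapes of $\tilde\CT_k$.

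The plan is to obtain $\tilde\CT_{k+1}$ by ``absorbing'' $\tilde\CT_k$-tiles into a very well-invariant template tiling $\CT^*$ produced by Theorem~\ref{exact_tilings}. Specifically, we will apply Theorem~\ref{exact_tilings} with invariance set $K'=K_{k+1}\cup E_kE_k^{-1}$ and a small parameter $\xi>0$ (to be specified) in place of $\eps$, additionally pushing the \Folner index used inside that theorem far enough so that the shapes of $\CT^*$ are large. For every tile $T\in\CT^*$ we set
$$
\tilde T=\bigcup\set{\hat T\in\tilde\CT_k:\hat c(\hat T)\in T},
$$
where $\hat c(\hat T)$ is the center of $\hat T$. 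Since each $\tilde\CT_k$-tile is absorbed into exactly one $\tilde T$, the family $\set{\tilde T:T\in\CT^*}$ partitions $G$ and each of its members is a union of whole $\tilde\CT_k$-tiles, so congruence is automatic. The new centers will be assigned via the canonical recipe of Remark~\ref{centers}, which keeps the shape set of $\tilde\CT_{k+1}$ finite.

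To check invariance, observe that $\tilde T\triangle T$ sits inside an $E_k$-thick boundary of $T$, which has size at most proportional to $\xi|T|$ by Lemma~\ref{estim}; Lemma~\ref{ben} then upgrades this closeness into $(K_{k+1},\eps_{k+1})$-invariance once $\xi$ is below the $\delta$ it produces for the pair $(K_{k+1},\eps_{k+1})$. For the entropy, we invoke Lemma~\ref{entest}: split $\CS(\tilde\CT_{k+1})$ into $r=|\CS(\CT^*)|$ classes $\CS_i$, one per shape $S_i$ of $\CT^*$. Because $\tilde T$ always contains the $E_k^{-1}$-core of $T$, the members of $\CS_i$ can differ only on the $E_k$-boundary of $S_i$, so $|\CS_i|$ is bounded by the number of $\tilde\CT_k$-configurations on that boundary, namely at most $(M_k+1)^{c\xi|S_i|}$ for a constant $c$ depending only on $|E_k|$. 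Choosing $\xi$ so small that $c\xi\log(M_k+1)/(1-\xi)\le\eps_{k+1}/3$ and the $\CT^*$-shapes large enough to exceed the minimal-size threshold $1/\delta(r,\eps_{k+1}/3)$ of Lemma~\ref{entest}, that lemma delivers $\h(\tilde\CT_{k+1})<\eps_{k+1}$.

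The main obstacle is the simultaneous calibration of these parameters. Note that a naive joining argument, bounding $\h(\tilde\CT_{k+1})$ by $\h(\CT^*)+\h(\tilde\CT_k)$, would fail whenever $\eps_{k+1}<\eps_k$; the crucial feature of Lemma~\ref{entest} is that the resulting entropy bound depends on the shape count $M_k$ of $\tilde\CT_k$ rather than on $\h(\tilde\CT_k)$, so even as $M_k$ and $|E_k|$ grow with $k$ we can compensate by fixing $\xi$ (based on $M_k$, $|E_k|$, $K_{k+1}$ and $\eps_{k+1}$) first and only then picking the \Folner index inside the application of Theorem~\ref{exact_tilings}.
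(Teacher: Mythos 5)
Your construction of $\tilde\CT_{k+1}$ (absorbing each tile of $\tilde\CT_k$ into the tile of a well-invariant template $\CT^*$ that contains its center) and your invariance argument via Lemmas \ref{estim} and \ref{ben} coincide with the paper's. The gap is in the entropy estimate. You invoke Lemma \ref{entest} with one class per shape of $\CT^*$, i.e.\ with $r=|\CS(\CT^*)|$. But Lemma \ref{entest} requires every shape to have cardinality at least $1/\delta(r,\eps_{k+1}/3)$, and inspecting its proof shows $\delta(r,\eps)$ must be taken of order $\eps/\log(r+1)$ (the term $2\delta\log(r+1)$, which pays for recording the class label of each tile, must stay below $\eps$). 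So you need the minimal shape size of $\tilde\CT_{k+1}$ to exceed roughly $\log|\CS(\CT^*)|/\eps_{k+1}$. Theorem \ref{exact_tilings}, however, gives no control whatsoever on $|\CS(\CT^*)|$: in its proof each shape $T^\circ$ spawns up to $\binom{|FT^\circ|}{6\gamma|T^\circ|}$ new shapes, so $\log|\CS(\CT^*)|$ is itself of the order of the shape sizes (times a constant depending on $F$ and $\gamma$ that you cannot make arbitrarily small a posteriori). This creates a genuine circularity: $r$ is only known after $\CT^*$ is built, and $\log r$ grows at least linearly with the very shape sizes you would need to push past the threshold $1/\delta(r,\eps_{k+1}/3)$. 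Your closing remark that one can ``fix $\xi$ first and only then pick the F\o lner index'' does not resolve this, because enlarging the F\o lner index enlarges $r$ along with the shapes.

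The paper closes exactly this hole by a different entropy argument that never counts the shapes of $\CT^*_{k+1}$: it observes that $\tilde\CT_{k+1}$ is a topological factor (with finite horizon) of the joining of $\CT^*_{k+1}$ with the sparse symbolic element that records only those centers of $\tilde\CT_k$-tiles lying outside the $D_k$-cores of the large tiles. The template contributes entropy $<\eps_{k+1}/2$ directly from Theorem \ref{exact_tilings}, and the sparse element has upper Banach density of non-zero symbols below a threshold $\delta$, hence entropy $<\eps_{k+1}/2$ by Lemma \ref{symbolic_small_entropy}; the subadditivity of entropy under joinings then gives $\h(\tilde\CT_{k+1})<\eps_{k+1}$. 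Your observation that $\tilde T$ is determined by $T^*$ together with the $\tilde\CT_k$-data on the $E_k$-boundary is precisely the right one, but it should be cashed in through this joining-plus-sparse-density route (or through a reproved variant of Lemma \ref{entest} in which the class label is supplied by $\CT^*$ rather than counted), not through Lemma \ref{entest} as stated.
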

\begin{proof}
Use Theorem~\ref{exact_tilings} to obtain a tiling $\CT^*_1$ whose shapes are $(K_1,\eps_1)$-invariant and topological entropy is strictly less than $\eps_1$. We set $\tilde\CT_1=\CT_1^*$. Suppose a tiling $\tilde\CT_k$
(as in the formulation of the lemma) has been constructed. We will construct $\tilde\CT_{k+1}$.

Let us denote by $D_k$ the set $\bigcup \CS(\tilde\CT_k)$. By an application of Lemmas \ref{ben} and \ref{symbolic_small_entropy}, there exists $\delta>0$ such that whenever $\frac{|T'\triangle T|}{|T|}<\delta$ and $T$ is $(K_{k+1},\delta)$-invariant then $T'$ is $(K_{k+1},\eps_{k+1})$-invariant, and any symbolic dynamical system with the alphabet $\CS(\tilde\CT_k)\cup\{0\}$ and upper Banach density of non-zero symbols smaller than $\delta$ has topological entropy less than $\frac{\eps_{k+1}}{2}$. Choose $\delta_k<\min\{\frac{\eps_{k+1}}2,\frac\delta{|D_k|+1}\}$. We can now use Theorem~\ref{exact_tilings}
again, with the parameter $\delta_k$, to obtain a tiling $\CT^*_{k+1}$ with entropy less than $\delta_k$ (hence strictly less than $\frac{\eps_{k+1}}{2}$), and the shapes of which are $(K'_{k+1},\delta_k)$-invariant, where $K'_{k+1} = K_{k+1}\cup D_k\cup D_k^{-1}$.\footnote{Such shapes are also $(D_k,\delta_k)$-invariant, $(D^{-1}_k,\delta_k)$-invariant, but only $(K_{k+1},2\delta_k)$-invariant; it is so since $D_k$ contains the unity, which we do not assume about $K_{k+1}$.} We need to modify the tiling $\CT^*_{k+1}$ to make it congruent with $\tilde\CT_k$, i.e., ensure that its tiles are unions of the tiles of $\tilde\CT_k$. Define a ``modification map'' $T^*\mapsto \tilde T$ (where $T^*\in\CT^*_{k+1}$) by $\tilde T=\bigcup\set{Sc\in \tilde\CT_k:c\in T^*}$. The center of $\tilde T$ is determined according to Remark \ref{centers}. That way we create a modified tiling, denoted $\tilde\CT_{k+1}$, congruent with $\tilde\CT_k$. It is easily verified that each tile $\tilde T$ of $\tilde\CT_{k+1}$ satisfies
$$
T^*_{D_k^{-1}}\subset \tilde T\subset D_kT^*
$$
and, clearly, $T^*$ is located between the same two extreme sets, hence
$$
\tilde T\triangle T^*\subset D_kT^*\setminus T^*_{D_k^{-1}}.
$$
Since $T^*$ is $(D_k,\delta_k)$-invariant (and $D_k$ contains the unity), we have $|D_kT^*|\le (1+\delta_k)|T^*|$. Also, since $T^*$ is
$(D^{-1}_k,\delta_k)$-invariant, $T^*_{D_k^{-1}}$ is a $(1-|D_k|\delta_k)$-subset of $T^*$ (see Lemma \ref{estim}). This yields
$$
\frac{|\tilde T\triangle T^*|}{|T^*|}\le (|D_k|+1)\delta_k< \delta.
$$
Since $T^*$ is $(K_{k+1},2\delta_k)$-invariant and $2\delta_k\le \delta$, the selection of $\delta$ implies $(K_{k+1},\eps_{k+1})$-invariance of $\tilde T$.

We will now argue that the modification does not increase the entropy too much. In the argument below we will refer to the tiles of $\tilde\CT_k$ as ``small'', and the tiles of $\CT^*_{k+1}$ as ``large''.

We claim, that in order to determine the modified large tiles, \emph{in addition} to knowing $\CT^*_{k+1}$, we only need to examine the centers of the small tiles lying \emph{outside} the union of the $D_k$-cores of the large tiles. Indeed, after all such centers have been examined and their corresponding small tiles have been allocated among the large tiles, the remaining part of each large tile $T^*$ (not covered by the above small tiles) can be ``blindly'' included to $\tilde T$; we do not need to check where exactly the remaining centers of small tiles are. It is so because a point of $T^*$ does not belong to $\tilde T$ only if it belongs to a small tile with center in a different large tile, say ${T'}^*$. In such case, however, this center does not belong to the $D_k$-core of ${T'}^*$, hence it lies outside the union of all such cores, and such centers have been already examined. So, the necessary information (additional to knowing $\CT^*_{k+1}$) can be encoded in a symbolic element obtained from the symbolic representation of $\tilde\CT_k$ (with non-zero symbols at the centers of the tiles) in which all symbols \emph{inside} the above mentioned $D_k$-cores are ignored, i.e., replaced by zeros. Since the union of these cores has lower Banach density at least $1-\delta$ (because $T_{D_k}^*$ is a $(1- |D_k| \delta_k)$-subset of $T^*$, Lemma \ref{cov1} applies), the upper Banach density of non-zero symbols in the discussed symbolic element is at most $\delta$. Its alphabet is $\CS(\tilde\CT_k)\cup\{0\}$, hence, by the choice of $\delta$, the entropy of such a symbolic element is less than $\frac{\eps_{k+1}}2$. Adding the entropy of $\CT^*_{k+1}$ we get that the entropy of $\tilde\CT_{k+1}$ is strictly less than $\eps_{k+1}$.
\end{proof}

The next statement is perhaps the most important in this paper.

\begin{thm}\label{congruent}
Let $G$ be an infinite countable amenable group.
Fix a converging to zero sequence $\eps_k> 0$ and a sequence $K_k$ of finite subsets of $G$.
There exists a congruent sequence of (exact) tilings $\overline\CT_k$ of $G$ such that the shapes of
$\overline\CT_k$ are $(K_k,\eps_k)$-invariant and $\h(\overline\CT_k)=0$ for each $k$.
\end{thm}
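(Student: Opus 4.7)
The plan is to start from the congruent sequence $\tilde\CT_k$ furnished by Lemma~\ref{sqt}, applied with a carefully chosen parameter schedule: I would choose a summable sequence $\eps'_k\leq\eps_k/2$ and enlargements $K'_k\supset K_k$ large enough that the minimum shape size $L_k$ of $\tilde\CT_k$ grows extremely fast (rapidly enough to dominate a recursively defined cardinality appearing below). This produces a congruent sequence $\tilde\CT_k$ with $\h(\tilde\CT_k)<\eps'_k$ and $(K_k,\eps'_k)$-invariant shapes. The idea is then to recursively recode each $\tilde\CT_k$ into a new tiling $\overline\CT_k$ whose shape labels are enriched so as to encode the placement of $\overline\CT_{k-1}$-tiles inside each $\overline\CT_k$-tile; this enrichment makes $\overline\CT_{k-1}$ a topological factor of $\overline\CT_k$ via a sliding block code whose horizon does not exceed a single $\overline\CT_k$-shape.

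Concretely, set $\overline\CT_1=\tilde\CT_1$, and given $\overline\CT_k$, define $\overline\CT_{k+1}$ as follows. For each tile $\tilde T=Sc$ of $\tilde\CT_{k+1}$, observe (via congruence) the combinatorial type $\tau$ of its decomposition into $\overline\CT_k$-tiles; there are only finitely many pairs $(S,\tau)$ with $S\in\CS(\tilde\CT_{k+1})$. Each pair $(S,\tau)$ is realized as a distinct finite subset of $G$ by a deterministic marker scheme: reserve a small canonical region inside the $K_{k+1}$-core of $S$ (a $(1-\eps)$-subset by Lemma~\ref{estim}) and perturb $S$ there by a canonical pattern encoding $\tau$. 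Centers of the new tiles are fixed via the convention of Remark~\ref{centers}, and Lemma~\ref{ben}, combined with a sufficient initial slack in the parameters $(K'_k,\eps'_k)$, ensures that the enriched shapes remain $(K_{k+1},\eps_{k+1})$-invariant. Because the shape label of any tile of $\overline\CT_{k+1}$ fully specifies its decomposition into $\overline\CT_k$-tiles, $\overline\CT_k$ is a topological factor of $\overline\CT_{k+1}$; by construction the new sequence is still congruent.

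To conclude, I would show $\h(\overline\CT_k)\to 0$ and then invoke the elementary fact that a non-decreasing (by the factor chain) sequence of non-negative reals converging to zero is identically zero. By Theorem~\ref{infimum}, taking $F$ to be a tile of $\overline\CT_k$ of minimum size $L_k$, the number $N(F)$ of blocks of domain $F$ appearing in $X_{\overline\CT_k}$ is bounded by the number of placements of tile centers in $F$ times $|\CS(\overline\CT_k)|$ raised to the maximum number of such centers; this yields a bound essentially of the form $\h(\overline\CT_k)\le(\log L_k+\log|\CS(\overline\CT_k)|)/L_k$. Although $|\CS(\overline\CT_k)|$ grows recursively (roughly $\log|\CS(\overline\CT_{k+1})|\le\log|\CS(\tilde\CT_{k+1})|+(M_{k+1}/L_k)\log|\CS(\overline\CT_k)|$, where $M_{k+1}$ is the maximum shape size of $\tilde\CT_{k+1}$), a sufficiently rapid growth of the $L_k$, prescribed at the outset, forces this ratio to tend to zero. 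The main obstacle I foresee is the delicate simultaneous calibration of $K'_k$, $\eps'_k$, and the marker region sizes so that invariance, the partition property, the factor relation with finite horizon, and the entropy bound all hold at once; this is essentially combinatorial bookkeeping, employing Lemmas~\ref{ben}, \ref{estim} and~\ref{sqt}, and Theorem~\ref{infimum}.
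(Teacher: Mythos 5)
Your plan has a fatal structural flaw, independent of the calibration issues you flag at the end. You set $\overline\CT_1=\tilde\CT_1$ and then build each $\overline\CT_{k+1}$ so that it \emph{fully encodes} $\overline\CT_k$ (via the shape labels carrying the decomposition type $\tau$). Consequently every $\overline\CT_k$ factors onto $\overline\CT_1=\tilde\CT_1$, so $\h(\overline\CT_k)\ge\h(\tilde\CT_1)$ for all $k$, and by the factor chain the sequence $\h(\overline\CT_k)$ is non-decreasing. Lemma~\ref{sqt} (via Theorem~\ref{exact_tilings}) only guarantees $\h(\tilde\CT_1)<\eps'_1$, not $\h(\tilde\CT_1)=0$; producing zero entropy is precisely what Theorem~\ref{congruent} is supposed to add. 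Hence your final step --- ``show $\h(\overline\CT_k)\to 0$ and use monotonicity'' --- cannot succeed: the monotone sequence is pinned from below by the possibly positive constant $\h(\tilde\CT_1)$. This also explains why your complexity bound $\h(\overline\CT_k)\le(\log L_k+\log|\CS(\overline\CT_k)|)/L_k$ cannot be driven to zero by any choice of parameters: enriching shapes by the actually occurring decomposition types is information-theoretically the same as forming the joining $\tilde\CT_1\vee\cdots\vee\tilde\CT_k$, so the number of types occurring for a shape $S$ is itself of order $2^{\h\cdot|S|}$ with $\h\ge\h(\tilde\CT_1)$, and the ratio $\log|\CS(\overline\CT_k)|/L_k$ stays bounded below by roughly that entropy no matter how fast $L_k$ grows. (A secondary problem: literally perturbing the subset $S\subset G$ inside a marker region to make the pairs $(S,\tau)$ distinct sets destroys both the exact-partition property and the congruence with $\overline\CT_k$; one would have to enlarge the alphabet instead, but that does not affect the entropy obstruction.)

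The paper escapes this trap by \emph{not} keeping the lower-level tilings fixed. For each shape of $\tilde\CT_{k+1}$ it selects a single ``master partition'' into shifted shapes of $\tilde\CT_k$, and then \emph{re-tiles} level $k$ deterministically from level $k+1$ by applying that master partition in every tile; no information is added to the top level, because the variability of the actual decompositions is simply discarded. Iterating downward from each $\tilde\CT_j$ produces a triangular array $\CT_k^{(j)}$ ($k\le j$), and a compactness/diagonal argument yields limits $\overline\CT_k$, each congruent with and a finite-horizon factor of every $\overline\CT_l$, $l\ge k$. The entropy of $\overline\CT_k$ is then bounded by $4\eps_j$ for \emph{every} $j\ge k$: one shows (via a measure-theoretic entropy estimate over the set $E_k$ witnessing the low complexity of $\tilde\CT_k$, plus the variational principle) that $\h(\overline\CT_j)\le 4\eps_j$, and uses the factor relation in the direction opposite to yours --- $\overline\CT_k$ is a factor of arbitrarily high levels whose entropies are arbitrarily small. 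If you want to repair your proof, you need some analogue of this ``replace rather than record'' step; the recording strategy cannot work.
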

\begin{proof}
We have constructed a congruent sequence of tilings $\tilde\CT_k$, each of entropy strictly less than $\eps_k$. We need to modify them one more time, to kill their entropy completely. To this end we will need another inductive procedure concluded by a limit passage.

First of all, in the construction of the sequence $\tilde\CT_k$ we add one more inductive (easily fulfilled) requirement: The tiling $\tilde\CT_k$ has entropy strictly less than $\eps_k$, hence there exists a finite set $E_k$ (for example a far enough member of the F\o lner sequence) such that the $E_k$-complexity of $\tilde\CT_k$ (i.e., the number of all blocks of the form $g\tilde x|_{E_k}$, where $\tilde x$ is the symbolic representation of $\tilde\CT_k$) does not exceed $2^{\eps_k|E_k|}$. For later purposes, we assume also that $|E_k|>\frac1{\eps_k}$.
We require that all shapes of $\tilde\CT_{k+1}$, in addition to being $(K_{k+1},\eps_{k+1})$-invariant, are also
$(E_k,\delta_k)$-invariant, for $\delta_k=\frac{\eps_k}{|E_k|\log{(|\Lambda_k|)}}$, where $\Lambda_k=\CS(\tilde\CT_k)\cup\{0\}$ is the alphabet used by symbolic representation of $\tilde\CT_k$. We can start the induction.

Every tile of $\tilde\CT_2$ is a union of the tiles of $\tilde\CT_1$, thus every shape of $\tilde\CT_2$ is partitioned by (shifted) shapes of $\tilde\CT_1$. However, for each shape of $\tilde\CT_2$ there possibly occur more than one different ways it is partitioned. Now, for each shape $S$ of $\tilde\CT_2$ we select one such way and call it ``the master partition'' of $S$. We create a new tiling, inscribed in $\tilde\CT_2$, using the same family of shapes as $\tilde\CT_1$ (perhaps not all shapes will be used, but that does not bother us), as follows: in each tile of $\tilde\CT_2$ we apply the (appropriately shifted) master partition of its shape. We denote this new tiling by $\CT_1^{(2)}$. Notice that this tiling is completely determined by $\tilde\CT_2$ and the ``finite bank of information'' containing the master partitions of all shapes of $\tilde\CT_2$. And clearly, the coding from $\tilde\CT_2$ to $\CT_1^{(2)}$ has a finite horizon. Thus, $\CT_1^{(2)}$ is a topological factor of $\tilde\CT_2$ (and congruent with it).

Analogously, from $\tilde\CT_2$ and $\tilde\CT_3$ we create a tiling $\CT_2^{(3)}$ which
uses the same shapes as $\tilde\CT_2$ and is congruent with and a topological factor of $\tilde\CT_3$.
Now, applying to the tiles of $\CT_2^{(3)}$ the master partitions from the preceding step (by the shifted shapes of $\tilde\CT_1$) we also create a new tiling $\CT_1^{(3)}$ using the same shapes as $\tilde\CT_1$, congruent with and being a topological factor of $\CT_2^{(3)}$ (and $\tilde\CT_3$).

Continuing in an obvious way we create a triangular array of tilings $\CT_k^{(j)}$
($k\le j$; we also place $\tilde\CT_k$ as $\CT_k^{(k)}$ along the diagonal of that array),
where $k$ is the row number, $j$ is the column number, the rows are finite and the columns are infinite,
with the following properties:

\begin{enumerate}
	\item $\CT_k^{(j)}$ uses the same shapes as $\tilde\CT_k$, for every $k\le j$;
	\item $\CT_k^{(j)}$ is congruent with and a topological factor of $\CT_l^{(j)}$, whenever $k\le l\le j$;
	\item each tile of $\CT_{k+1}^{(j)}$ is partitioned by the tiles of $\CT_k^{(j)}$ according to the master partition of its shape $S$ (here $k<j$).
\end{enumerate}
We recall that the above master partition is defined using the ``original'' tilings $\tilde\CT_k$ and $\tilde\CT_{k+1}$ (as a selected one of many ways of partitioning the shape $S$) and then it does not change in the following steps of the construction of the array of tilings.

By compactness of the symbolic spaces $\Lambda_k^G$ (where $\Lambda_k$ is the alphabet used in all tilings in the $k$th column), there exists a subsequence $j_i$ such that $\CT_k^{(j_i)}$ converges, for every $k$, to some symbolic element, say $\overline\CT_k\in\Lambda_k^G$. Now, all combinatorial properties satisfied by the elements $\CT_k^{(j_i)}$ (and pairs $\CT_k^{(j_i)}$, $\CT_l^{(j_i)}$) verifiable by finite horizon testing (where the horizon does not depend on $j_i$) pass on to the limit element (because such properties hold on closed sets). In particular:
\begin{enumerate}
	\item $\overline\CT_k$ represents an exact tiling with shapes $\CS(\tilde\CT_k)$;
	\item $\overline\CT_k$ is congruent with and a topological factor of $\overline\CT_l$, whenever $k\le l$;
	\item each tile of $\overline\CT_{k+1}$ is partitioned by the tiles of $\overline\CT_k$ according to the master partition of its shape $S$ (for any $k\ge 1$).
\end{enumerate}
Property (1) implies that the shapes of $\overline\CT_k$ are $(K_k,\eps_k)$-invariant, as needed.

Because the estimation of the topological entropy of $\overline\CT_k$ involves measure-theoretic entropy (and the variational principle) we isolate it as a separate lemma. The main proof will be resumed afterwards.

\begin{lem}For each $k$ and every invariant measure $\mu$ supported by the dynamical tiling $\overline X$ generated by $\overline\CT_k$ we have $h_\mu(G) <4 \eps_k$.
\end{lem}
\begin{proof}
Recall that the measure-theoretic entropy is computed as
$$
h_\mu(G) = \lim_{n\to\infty} \frac1{|F_n|}H_\mu(\overline X_{F_n}),
$$
where
$$
H_\mu(\overline X_{F_n}) = -\sum_{B\in \overline X_{F_n}}\mu([B])\log(\mu([B])).
$$
Moreover, the above limit is the same as the infimum (by the strong sub-additivity property of entropy function, see \cite[Proposition 3.1.9]{MO}). Hence, in order to estimate $h_\mu(G)$ from above it suffices to estimate $\frac1{|F_n|}H_\mu(\overline X_{F_n})$ for just one (arbitrary) F\o lner set. We will use the particular set $E_k$ selected at the beginning of the last inductive construction, such that the $E_k$-complexity of $\tilde\CT_k$ does not exceed $2^{\eps_k|E_k|}$. We have
$$
H_\mu(\overline X_{E_k}) = -\!\!\!\!\!\!\sum_{B\in \overline X_{E_k}\cap\tilde X_{E_k}}\!\!\!\!\!\!\mu([B])\log(\mu([B]))
-\!\!\!\!\!\!\sum_{B\in \overline X_{E_k}\setminus\tilde X_{E_k}}\!\!\!\!\!\!\mu([B])\log(\mu([B])),
$$
where $\tilde X$ is the dynamical tiling generated by $\tilde\CT_k$.
The entropy of a finite-dimensional sub-probabilistic vector is estimated from above by the mass of the vector times the logarithm of its dimension, plus 1. Thus, the first sum does not exceed 1 times the logarithm of the $E_k$-complexity of $\tilde\CT_k$ (i.e., $\eps_k|E_k|$), plus 1. The second sum does not exceed the measure of the union of all cylinders corresponding to blocks $B$ with domain $E_k$ occurring in $\overline X$ but not in $\tilde X$, times the logarithm of the number of all possible blocks with domain $E_k$ (i.e., times $\log(|\Lambda_k|^{|E_k|})$), plus 1.
Observe that if $g$ is such that $E_kg$ is contained in a tile of $\overline\CT_{k+1}$ then the associated block $B$ (formally equal to $g\overline\CT_k|_{E_k}$) arises from the master partition of this tile's shape and thus the same block $B$ occurs in $\tilde\CT_k$ (at some position $g'$), hence in $\tilde X$. So, a block $B$ occurs in $\overline X$ but not in $\tilde X$ only if it occurs in $\overline\CT_k$ exclusively at such positions $g$ that $E_kg$ is not contained in one tile of $\overline \CT_{k+1}$. This happens only when $g$ does not fall in the $E_k$-core of any tile of $\overline \CT_{k+1}$. Recall that each tile of $\overline \CT_{k+1}$ is $(E_k,\delta_k)$-invariant, so, by Lemma \ref{estim}, its $E_k$-core is its $(1-\xi)$-subset, where $\xi = \delta_k|E_k|= \frac{\eps_k}{\log(|\Lambda_k|)}$. Now, by Lemma \ref{cov1} (for a $1$-covering tiling) we get that the upper Banach density of the set not covered by the discussed $E_k$-cores is at most $\xi$. By Lemma~\ref{measure}, the set of all points in the dynamical tiling generated by $\overline \CT_{k+1}$ (each such point represents a tiling), such that $e$ does not belong to the union of the $E_k$-cores of all tiles, has measure at most $\xi$, for every invariant measure supported by this dynamical tiling. It follows from our earlier discussion, that the above set contains the preimage (via the factor map from $\overline\CT_{k+1}$ to $\overline\CT_k$), of the union of the cylinders $B$ indexing the second large sum above. Thus any invariant measure supported by $\overline\CT_k$ (in particular $\mu$) gives this union a value at most $\xi$.
Eventually, we get the estimate
$$
H_\mu(\overline X_{E_k})\le \eps_k|E_k| + \xi|E_k|\log(|\Lambda_k|) + 2 = 2\eps_k|E_k| + 2<4\eps_k|E_k|.
$$
\end{proof}

We return to the main proof. The above lemma, together with the variational principle for amenable group actions (see \cite[Variational Principle 5.2.7]{MO}) imply that $\h(\overline\CT_k)\le 4\eps_k$. On the other hand, since $\overline\CT_k$ is a factor of any $\overline\CT_j$ with $j\ge k$, we obtain $\h(\overline\CT_k)\le 4\eps_j$, which implies that $\h(\overline\CT_k)=0$.
\end{proof}

\section{Free action with entropy zero}

We are in a position to construct a free, zero entropy action of $G$ on a zero-dimensional space.

\begin{thm} \label{free action}
Let $G$ be an infinite countable amenable group. There exists a zero-dimensional space $\mathfrak X$
and a free action of $G$ on $\mathfrak X$ which has topological entropy zero.
\end{thm}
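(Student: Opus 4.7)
The plan is to apply Theorem~\ref{congruent} with $\eps_k\downarrow 0$ and an exhausting sequence $K_k$ of symmetric finite subsets of $G$ containing the unity ($K_k=K_k^{-1}\ni e$, $\bigcup_k K_k=G$), obtaining a congruent sequence of exact tilings $\overline\CT_k$ of $G$ with $(K_k,\eps_k)$-invariant shapes and $\h(\overline\CT_k)=0$. I will set $\Lambda_k=\CS(\overline\CT_k)\cup\{0\}$ and take $\mathfrak X$ to be the orbit closure, under the diagonal shift action of $G$, of the point $(\overline\CT_k)_{k\ge 1}$ in the product space $\prod_{k\ge 1}\Lambda_k^G$. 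As a closed, shift-invariant subset of a countable product of discrete spaces, $\mathfrak X$ is compact and zero-dimensional. It is contained in the countable product of the individual dynamical tilings, whose topological entropy equals $\sum_k\h(\overline\CT_k)=0$ by the fact about products of subshifts recalled in the preliminaries; hence $\h(\mathfrak X)=0$.

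For the freeness I will argue by contradiction. Assume $(y_k)_k\in\mathfrak X$ is fixed by some $g\ne e$, so $y_k(hg)=y_k(h)$ for every $k$ and $h\in G$. By Remark~\ref{dyntil} each $y_k$ represents a tiling with shapes in $\CS(\overline\CT_k)$, and the condition above means precisely that the center set is right-$g$-invariant: whenever $h$ is a center for some shape $S$, so is $hg$, and the two distinct tiles $Sh$ and $Shg=(Sh)g$ are disjoint as tiles of a partition, forcing $S\cap Sg=\emptyset$ for every shape $S$ appearing at a center. Now choose $k$ so large that $g,g^{-1}\in K_k$ and $\eps_k$ is small. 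Because each $S\in\CS(\overline\CT_k)$ was assembled (through Lemmas~\ref{quasitilings}, \ref{disjoint_tilings} and Theorems~\ref{exact_tilings}, \ref{congruent}) as a small perturbation of a symmetric F\o lner set, we have $|S\triangle S^{-1}|\le\eta_k|S|$ with $\eta_k\to 0$. Using the identity $|S^{-1}g\triangle S^{-1}|=|g^{-1}S\triangle S|\le\eps_k|S|$ obtained by inversion together with the triangle inequality for symmetric differences,
\[
|Sg\triangle S|\le|Sg\triangle S^{-1}g|+|S^{-1}g\triangle S^{-1}|+|S^{-1}\triangle S|\le(2\eta_k+\eps_k)|S|,
\]
we obtain $|S\cap Sg|\ge(1-\eta_k-\eps_k/2)|S|>0$ for $k$ large, contradicting $S\cap Sg=\emptyset$. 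Hence $g=e$ and the action is free.

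The delicate point---and the main obstacle---is reconciling the left-handed $(K,\eps)$-invariance delivered by Theorem~\ref{congruent} with the right-handed disjointness $S\cap Sg=\emptyset$ forced by $g$-fixity of a symbolic element. The bridge will be the near-symmetry of shapes inherited from the Ornstein--Weiss construction; this property is preserved throughout the chain of modifications because shapes always remain close in symmetric difference to the symmetric F\o lner sets they were carved from, and, if needed, it could be recorded as an additional explicit clause in the earlier statements.
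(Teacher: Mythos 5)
Your entropy and zero-dimensionality arguments are fine, but the freeness argument has two genuine gaps, and the second one is fatal to the whole strategy. First, the reduction is wrong in non-abelian groups: with the shift $(gy)(h)=y(hg)$, $g$-fixity of $y_k$ makes $h$ and $hg$ centers of the same shape $S$, and the two tiles $Sh$ and $Shg=(Sh)g$ intersect if and only if $hgh^{-1}\in S^{-1}S$ (an element $s_1h=s_2hg$ exists iff $s_2^{-1}s_1=hgh^{-1}$). So disjointness forces $hgh^{-1}\notin S^{-1}S$, not $S\cap Sg=\emptyset$; the conjugating element $h$ is an arbitrary center, so $hgh^{-1}$ runs over a conjugacy class that is not contained in any fixed finite $K_k$, and left $(K_k,\eps_k)$-invariance says nothing about it. Second, the bridging claim that shapes stay nearly symmetric is false. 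Already in Lemma~\ref{disjoint_tilings} a shape is obtained by \emph{re-centering}: it has the form $S's_0^{-1}$ where $S'$ is a large subset of the symmetric F\o lner set $F_{n_i}$ and $s_0\in S'$, and later steps (Theorem~\ref{exact_tilings}, Lemma~\ref{sqt}) re-center again. A right translate of a symmetric set by one of its own elements is in general far from symmetric: already in $\Z$, for $F=[-n,n]$ and $s_0=-n$ the shape is $S=[-2n,0]$ with $\abs{S\triangle S^{-1}}\approx 2\abs{S}$. So the property $\abs{S\triangle S^{-1}}\le\eta_k\abs{S}$ cannot be ``recorded as an additional clause'' --- it simply fails for the shapes the construction produces --- and with it the chain of inequalities bounding $\abs{Sg\triangle S}$ collapses.

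The paper sidesteps all of this by \emph{not} trying to prove that the inverse limit of the tilings acts freely. Instead, for each $g\ne e$ it builds a separate zero-entropy subshift $\mathfrak X_g$ with no $g$-fixed points and takes the countable product. For $g$ of infinite order, $\mathfrak X_g$ is a two-symbol ``parity along the sets $\langle g\rangle b$'' element whose $\{g,\dots,g^n\}$-complexity equals $2$, so Theorem~\ref{infimum} gives entropy zero. For $g$ of finite order $q$, the machinery of Lemma~\ref{sqt} and Theorem~\ref{congruent} is \emph{seeded} with the tiling by the classes of $f\sim h\iff f=g^ph$, whose single shape is $\{e,g,\dots,g^{q-1}\}$; then $g$-fixity places the two centers $c=g^p$ and $cg=g^{p+1}$ inside the \emph{same} tile, contradicting uniqueness of the center --- a purely combinatorial argument that never needs $Sh\cap Shg=\emptyset$ and is immune to the conjugation problem. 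To repair your proof you would need either to adopt this per-element strategy or to supply a genuinely new argument for freeness of the tiling system itself; the invariance properties of the shapes alone do not provide one.
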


\begin{proof}
It suffices to show that for every $g\in G$, $g\neq e$, there exits a symbolic system $\mathfrak{X}_g\subset \{0,1\}^G$ with topological entropy zero and such that no points of $\mathfrak{X}_g$ are fixed by the shift by $g$. Once this is done, we can define $\mathfrak{X}=\prod_{g\neq e}\mathfrak{X}_g$ (with the product action). This system obviously has no points fixed by any $g\neq e$ (i.e., this is a free action), and as a countable product of zero-entropy subshifts it is zero-dimensional and has topological entropy zero.
\smallskip

We will use two different techniques, depending on whether $g$ has a finite order or not. The finite order case strongly relies upon our exact tilings with entropy zero constructed in the preceding sections. In each case the alphabet of $\mathfrak X_g$ will consist of two symbols (although not necessarily denoted as 0 and 1).
\smallskip

Fix $g\in G$ and assume the order of $g$ to be infinite. The following is an equivalence relation on $G$: $f \sim h \iff f=g^ph$ for some $p\in \mathbb{Z}$. Let $B$ be a set containing exactly one element from each equivalence class. Now every element $h\in G$ has a unique representation $h=g^pb$, where $p\in \mathbb{Z}$ and $b\in B$. Denote this exponent $p$ by $p(h)$. We define a symbolic element $x \in \set{-1,1}^G$ by $x(h)=(-1)^{p(h)}$, and we let $\mathfrak{X}_g$ be the shift orbit closure of $x$.

Suppose that for some $y\in \mathfrak{X}_g$ we have $gy=y$, in particular $y(g)=y(e)$. Let $h_n$ be a sequence of elements of $G$ such that $y=\lim_{n\to\infty}h_nx$. For large enough $n$ we have $y(e)=x(h_n)$ and $y(g)=h_nx(g)=x(gh_n)$. Therefore, on one hand $x(gh_n)=x(h_n)$, and on the other, by the definition of $x$, we have $x(gh_n)= -x(h_n)$. We have shown that $g$ fixes no points of $\mathfrak X_g$.

To show that $\h(\mathfrak{X}_g)=0$ let $F=\{g,g^2,\dots,g^n\}$ for some $n\in\N$. It is easy to see that the $F$-complexity equals 2 (there are only two blocks with domain $F$: $[-1,1,\dots,(-1)^n]$ and $[1,-1,\dots,(-1)^{n+1}]$). Thus $\frac1{|F|}\log N(F) = \frac{\log 2}{n}$, which is arbitrarily small, implying, via Theorem~\ref{infimum}, that the entropy of the subshift is indeed zero.
\smallskip

Now assume the order of $g$ is finite and equals $q$. We can still define the relation $\sim$ as before, the only difference being that the equivalence classes are now finite. Therefore they form a tiling, say $\CT_0$, of $G$ (this tiling has one shape $S=\{e,g,\dots,g^{q-1}\}$, the centers are assigned arbitrarily within the tiles\footnote{Note that any point within the tile can be assigned its center without needing to shift the shape.}). Setting $K_1=\{e\}$ and $\eps_1=1$ we see that $\CT_0$ can be used (in place of $\CT_1^*$) as the first tiling $\widetilde\CT_1$ in Lemma~\ref{sqt}. Now Theorem~\ref{congruent} produces a new sequence of tilings $(\overline\CT_k)_{k\geq 1}$ such that $\h(\overline\CT_k)=0$ for each $k\ge 1$ and $\overline\CT_k$ uses the same shapes as $\widetilde\CT_k$. In particular, $\overline\CT_1$ has the same one shape $S$, i.e., it is the partition into equivalence classes ($\overline\CT_1$ differs from $\widetilde\CT_1=\CT_0$ in having the centers positioned ``more intelligently'' within the tiles). Let $\mathfrak{X}_g$ be the dynamical tiling generated by $\overline\CT_1$.
We already know that this symbolic system has entropy zero. Recall that every symbolic element $y\in\mathfrak X_g$ represents a tiling (using the same one shape $S$), in particular every tile has only one center, i.e., within every tile there is only one nonzero symbol $S$ (we agreed
to use shape labels as symbols placed at the tile centers, and zeros everywhere else).

Suppose that for some $y\in \mathfrak{X}_g$ we have $gy=y$. Let $c$ be the center of the tile containing $e$. Then $c=g^p$ for some $p\in\{0,1,\dots,q-1\}$ and $y(c)=S$. Since $gy=y$ we also have $S=gy(c)=y(cg) = y(g^{p+1})$. Clearly $g^{p+1}$ belongs to the same class (i.e., the same tile) as $c$, and since $g\neq e$,
$g^{p+1}\neq c$. We have found two nonzero symbols in one tile, a contradiction. This concludes the proof.
\end{proof}
%
%

\begin{acknowledgements}
Part of the work was carried out during a series of visit of T. Downarowicz to School of Mathematical Sciences and LMNS (Fudan University) and a visit of G. H. Zhang to Institute of Mathematics, Polish Academy of Sciences (IMPAN). We gratefully acknowledge the hospitality of Fudan University and IMPAN.
The research of the first two authors is funded by NCN grant 2013/08/A/ST1/00275.
G. H. Zhang is supported by FANEDD (201018) and NSFC (11271078).

\end{acknowledgements}


\begin{thebibliography}{}
%
%
\bibitem[BBF]{BBF}
Mathias Beiglb{\"o}ck, Vitaly Bergelson, and Alexander Fish, Sumset phenomenon in countable amenable groups, Adv. Math. 223 no.~2, 416--432 (2010). MR 2565535 (2011a:11011)
\bibitem[BD]{BD}
Mike Boyle and Tomasz Downarowicz, The entropy theory of symbolic
  extensions, Invent. Math. 156, no.~1, 119--161 (2004). MR 2047659
  (2005d:37015)

\bibitem[Da]{Da}
Alexandre~I. Danilenko, Entropy theory from the orbital point of view,
  Monatsh. Math. 134, no.~2, 121--141  (2001). MR 1878075 (2002j:37011)

\bibitem[Do]{Do}
Tomasz Downarowicz, Entropy in dynamical systems, New Mathematical
  Monographs, vol.~18, Cambridge University Press, Cambridge (2011). MR 2809170(2012k:37001)

\bibitem[DF]{DF}
Tomasz Downarowicz and Bartosz Frej, Entropy as infimum over finite subsets of the acting group, preprint.





\bibitem[L]{L}
Elon Lindenstrauss, Pointwise theorems for amenable groups, Invent.
  Math. 146, no.~2, 259--295 (2001). MR 1865397 (2002h:37005)

\bibitem[LW]{LW}
Elon Lindenstrauss and Benjamin Weiss, Mean topological dimension,
  Israel J. Math. 115, 1--24 (2000). MR 1749670 (2000m:37018)

\bibitem[MO]{MO}
Jean Moulin~Ollagnier, Ergodic theory and statistical mechanics, Lecture
  Notes in Mathematics, vol. 1115, Springer-Verlag, Berlin (1985). MR 781932 (86h:28013)

\bibitem[N]{N}
I.~Namioka, F\o lner's conditions for amenable semi-groups, Math. Scand.
  15, 18--28 (1964). MR 0180832 (31 \#5062)

\bibitem[OW1]{OW1}
Donald~S. Ornstein and Benjamin Weiss, Ergodic theory of amenable group
  actions. {I}. {T}he {R}ohlin lemma, Bull. Amer. Math. Soc. (N.S.) 2, no.~1, 161--164 (1980). MR 551753 (80j:28031)

\bibitem[OW2]{OW2}
Donald~S. Ornstein and Benjamin Weiss, Entropy and isomorphism theorems for actions of amenable groups, J. Analyse Math. 48, 1--141 (1987). MR 910005 (88j:28014)

\bibitem[RW]{RW}
Daniel~J. Rudolph and Benjamin Weiss, \emph{Entropy and mixing for amenable
  group actions}, Ann. of Math. (2) 151, no.~3, 1119--1150  (2000).
  MR 1779565 (2001g:37001)

\bibitem[We]{We}
Benjamin Weiss, Monotileable amenable groups, Topology, ergodic theory,
  real algebraic geometry, Amer. Math. Soc. Transl. Ser. 2, vol. 202, Amer.
  Math. Soc., Providence, RI, pp.~257--262 (2001). MR 1819193 (2001m:22014)

\bibitem[WZ]{WZ}
Thomas Ward and Qing Zhang, The {A}bramov-{R}okhlin entropy addition
  formula for amenable group actions, Monatsh. Math. 114,
  no.~3-4, 317--329 (1992). MR 1203977 (93m:28023)





\end{thebibliography}


\end{document}